\newcommand{\be}{\begin{eqnarray}}
\newcommand{\ben}{\begin{eqnarray*}}
\newcommand{\en}{\end{eqnarray}}
\newcommand{\enn}{\end{eqnarray*}}
\newtheorem{theorem}{Theorem}[section]
\newtheorem{lemma}{Lemma}[section]
\newtheorem{prp}[theorem]{Proposition}
\newtheorem{thm}[theorem]{Theorem}
\newtheorem{dfn}{Definition}[section]
\newtheorem{remark}{Remark}
\definecolor{rr}{rgb}{1,0,0}
\begin{document}
\renewcommand{\theequation}{\arabic{section}.\arabic{equation}}
\begin{titlepage}
\title{\bf  Quadratic Transportation Cost Inequality For Scalar Stochastic Conservation Laws
}
\author{ Rangrang Zhang$^{1}$, Tusheng Zhang$^{2}$\\
{\small $^1$  School of Mathematics and Statistics, Beijing Institute of Technology, Beijing 100081, China}\\
{\small $^2$ Department of  Mathematics, University of Manchester, Oxford Road, Manchester M13 9PL, United Kingdom}\\
({\small {\sf rrzhang@amss.ac.cn}, \ {\sf tusheng.zhang@manchester.ac.uk}})}
\date{}
\end{titlepage}
\maketitle

\noindent\textbf{Abstract}:
In this paper, we established a quadratic transportation cost inequality for scalar stochastic conservation laws driven by multiplicative noise. The doubling variables method plays an important role.

\noindent \textbf{AMS Subject Classification}: Primary 60F10; Secondary 60H15, 60G40.

\noindent\textbf{Keywords}: Quadratic transportation cost inequality; conservation laws; Kinetic solutions.

\section{Introduction}
Fix
$T>0$ and let $(\Omega,\mathcal{F},\mathbb{P},\{\mathcal{F}_t\}_{t\in
[0,T]},(\{\beta_k(t)\}_{t\in[0,T]})_{k\in\mathbb{N}})$ be a stochastic basis. Without loss of generality, here the filtration $\{\mathcal{F}_t\}_{t\in [0,T]}$ is assumed to be complete and $\{\beta_k(t)\}_{t\in[0,T]}(k \in\mathbb{N})$ are one-dimensional i.i.d. real-valued  $\{\mathcal{F}_t\}_{t\in [0,T]}-$Wiener processes. The symbol $\mathbb{E}$ denotes the expectation with respect to $\mathbb{P}$.
For fixed $N\in\mathbb{N}$, let $\mathbb{T}^N\subset\mathbb{R}^N$ be the $N-$dimensional torus with the periodic length to be $1$.
Consider the following Cauchy problem for the scalar conservation laws with stochastic forcing
\begin{eqnarray}\label{P-19}
\left\{
  \begin{array}{ll}
  du(t,x)+div A(u(t,x))dt=\sum_{k\geq 1}g_k(x,u(t,x)) d\beta_k(t) \quad {\rm{in}}\ \mathbb{T}^N\times (0,T],\\
u(\cdot,0)=\eta(\cdot) \quad {\rm{on}}\ \mathbb{T}^N.
  \end{array}
\right.
\end{eqnarray}
where $u:(\omega,x,t)\in\Omega\times\mathbb{T}^N\times[0,T]\mapsto u(\omega,x,t):=u(x,t)\in\mathbb{R}$ is a random field, the flux function $A:\mathbb{R}\to\mathbb{R}^N$ and the coefficient $g_k(\cdot,\cdot)$ are measurable and fulfill certain conditions (see Section 2 in below).  Moreover, the initial value $\eta\in L^{\infty}(\mathbb{T}^N)$ is a deterministic function.
\vskip 0.3cm
The purpose of this paper is to establish the quadratic transportation cost inequality for the solution of the stochastic conservation laws. Let us recall the relevant concepts.
Let $(X, d)$ be a metric  space equipped with the  Borel  $\sigma$-field ${\cal B}$. Let $\mu$, $\nu$ be two Borel probability measures on the metric space $(X, d)$. The $L^p$-Wasserstein distance  between $\mu$ and $\nu$ is defined as
$$W_p(\nu, \mu):=\left[\inf \iint_{X\times X}d(x,y)^p\,\pi(\mathrm{d}x,\mathrm{d}y)\right]^{\frac{1}{p}},$$
where the infimum is taken over all probability measures $\pi$ on the product space $X\times X$ with marginals $\mu$ and $\nu$. Recall that the Kullback information (or relative entropy) of $\nu$ with respect to $\mu$ is defined by
\[H(\nu|\mu):=\int_X \log\left(\frac{\mathrm{d}\nu}{\mathrm{d}\mu}\right)\, \mathrm{d}\nu ,\]
if $\nu$ is absolutely continuous with respect to $\mu$, and $+\infty$ otherwise.
\begin{dfn}
We say that a
 measure $\mu$ satisfies the $L^p$-transportation cost inequality if there exists a constant $C>0$ such that for all probability measures $\nu$,
\begin{equation}\label{1.2}
 W_p(\nu, \mu)\leq \sqrt{2C H(\nu| \mu)}.
 \end{equation}
 The case $p=2$ is referred to as the quadratic transportation cost inequality.
\end{dfn}

Transportation cost inequalities have close connections with other functional inequalities, e.g. Poincare inequalities, logarithmic Sobolev inequalities, and they  also imply the  concentration of measure phenomenon.
\vskip 0.3cm
For a measurable subset $A\subset X$ and $r>0$, we denote by $A_r$  the $r$-neighborhood of $A$, namely
$A_r=\{x: d(x,A)<r\}$. We say that $\mu$ has normal concentration (or  Gaussian tail estimates)  on $(X, d)$ if there are constants $C,c>0$ such that for every $r>0$ and every Borel subset $A$ with $\mu(A)\geq \frac{1}{2}$,
\begin{equation}\label{0.1}
1-\mu(A_r)\leq Ce^{-cr^2}.
\end{equation}

The fact that the $L^1$-transportation cost inequality implies normal concentration was obtained in \cite{M-1,M-2} by Marton and in \cite{T1,T2,T3} by Talagrand. An elegant, simple proof of this fact is also contained in the book \cite{L}. The connection of the quadratic transportation cost inequality  with other functional inequalities was studied in \cite{OV} by Otto and Villani (see also \cite{L}). For other related interesting works, we refer the reader to  \cite{GRS}, \cite{LW}, \cite{PS}, \cite{PS1}.

\vskip 0.3cm

In the past decades, many people  established quadratic transportation cost inequalities  for various kinds of interesting measures. Let us  mention several papers which are relevant to our work. The transportation cost inequalities for stochastic differential equations were obtained by H. Djellout, A. Guillin and L. Wu in \cite{DGW}. The measure concentration for multidimensional diffusion processes with reflecting boundary conditions was considered by S. Pal in \cite{P}. The quadratic transportation cost inequalities for  stochastic partial differential equations (SPDEs) driven by Gaussian noise which is white in time and colored in space were obtained by A. S. Ustunel in \cite{U}. In the article \cite{BH2}, the authors obtained the quadratic transportation cost inequality under the $L^2$-distance for stochastic heat equations. In \cite{KS}, the authors established the quadratic transportation cost inequality  for more general stochastic partial differential  equations (SPDEs) under  the $L^2$-distance and under the uniform distance for the case of additive noise. In \cite{SZ}, the authors obtained the quadratic transportation cost inequality  for stochastic heat equations equations driven by multiplicative space-time white noise  under the uniform distance.

\vskip 0.3cm
On the other hand, both deterministic ($g_k=0$) and stochastic conservation laws have been studied extensively  by many people.
Conservation law is fundamental to our understanding of the space-time evolution laws of interesting physical quantities. For more background on this model, we refer the readers to
the monograph \cite{Dafermos}, the work of Ammar,
Wittbold and Carrillo \cite{K-P-J} and references therein. As we know, the Cauchy problem
for the deterministic first-order PDE (\ref{P-19}) does not admit any (global) smooth solutions, but there exist infinitely many weak solutions to the deterministic Cauchy problem. To solve the problem of non-uniqueness, an additional entropy condition was added to identify the physical weak solution. Under this condition,
the notion of entropy solutions for the deterministic first-order scalar conservation laws was introduced by Kru\v{z}kov \cite{K-69,K-70}.
The kinetic formulation of weak entropy solution of the Cauchy problem for a general multi-dimensional scalar conservation laws (also called the kinetic system), was derived by Lions, Perthame and Tadmor in \cite{L-P-T}.
\vskip 0.3cm
In recent years,  the stochastic conservation law has been developed rapidly. We refer the reader to the references \cite{K},  \cite{V-W},\cite{F-N}, \cite{DWZZ} etc. We
  particularly mention the paper \cite{D-V-1} in which the authors  proved the existence and uniqueness of kinetic solution to the Cauchy problem for (\ref{P-19}) in any dimension.
 In addition, the long-time behavior of the first-order scalar conservation laws has been studied in the paper  \cite{D-V-2}.
Recently, combining techniques used in the context of kinetic solutions as well as new results on large deviations, Dong et al. \cite{DWZZ} established Freidlin-Wentzell's type large deviation principles (LDP) for the kinetic solution to the scalar stochastic conservative laws.

\smallskip
 The purpose of this paper is to establish the quadratic transportation cost inequality for the kinetic solution of the scalar stochastic conservation laws, which in particular implies the concentration phenomenon of the law of the solution. To our knowledge, the present paper is the first work towards proving the transportation cost inequality directly for the kinetic solutions to the scalar stochastic conservation laws.
Due to the lack of viscous term, the kinetic solutions of (\ref{P-19}) are living in a rather irregular space $L^1([0, T],L^1(\mathbb{T}^N))$, we will use
 the doubling variables method as in the work \cite{D-V-1}. Differ from \cite{D-V-1}, we need to deal with the martingale term carefully to derive a proper bound, which can ensure the application of Gronwall inequality to get an appropriate norm estimation (see (\ref{qq-10})). As an important part of the proof, we also need to make some higher order estimates of the error term than \cite{D-V-1}, which is nontrivial and completely new.

This paper is organized as follows. In Section 2, we lay out  the precise setup for the stochastic conservation law and recall some  of the known results. Section 3 is devoted to the proof of the transportation cost inequality.

\section{Framework}\label{S:2}
\setcounter{equation}{0}

In this section, we will lay out  the precise setup for the stochastic conservation law and recall some  results which will be used later.

\subsection{Kinetic solution}

We will follow closely the framework of \cite{D-V-1}.
Let $\|\cdot\|_{L^p}$ denote the norm of usual Lebesgue space $L^p(\mathbb{T}^N)$ for $p\in [1,\infty]$. In particular, set $H=L^2(\mathbb{T}^N)$ with the corresponding norm $\|\cdot\|_H$.
  $C_b$ represents the space of bounded, continuous functions and $C^1_b$ stands for the space of bounded, continuously differentiable functions having bounded first order derivative. Define the function $f(x,t,\xi):=I_{u(x,t)>\xi}$, which is the characteristic function of the subgraph of $u$. We write $f:=I_{u>\xi}$ for short.
 Moreover, denote by the brackets $\langle\cdot,\cdot\rangle$ the duality between $C^{\infty}_c(\mathbb{T}^N\times \mathbb{R})$ and the space of distributions over $\mathbb{T}^N\times \mathbb{R}$. In what follows, with a slight abuse of the notation $\langle\cdot,\cdot\rangle$, we denote the following integral by
\[
\langle F, G \rangle:=\int_{\mathbb{T}^N}\int_{\mathbb{R}}F(x,\xi)G(x,\xi)dxd\xi, \quad F\in L^p(\mathbb{T}^N\times \mathbb{R}), G\in L^q(\mathbb{T}^N\times \mathbb{R}),
\]
where $1\leq p\leq +\infty$, $q:=\frac{p}{p-1}$ is the conjugate exponent of $p$. In particular, when $p=1$, we set $q=\infty$ by convention. For a measure $m$ on the Borel measurable space $\mathbb{T}^N\times[0,T]\times \mathbb{R}$, the shorthand $m(\phi)$ is defined by
\[
m(\phi):=\langle m, \phi \rangle([0,T]):=\int_{\mathbb{T}^N\times[0,T]\times \mathbb{R}}\phi(x,t,\xi)dm(x,t,\xi), \quad  \phi\in C_b(\mathbb{T}^N\times[0,T]\times \mathbb{R}).
\]
In the sequel,  the notation $a\lesssim b$ for $a,b\in \mathbb{R}$  means that $a\leq \mathcal{D}b$ for some constant $\mathcal{D}> 0$ independent of any parameters.

\subsection{Hypotheses}
For the flux function $A$ and the coefficients  of (\ref{P-19}), we assume that
\begin{description}
  \item[\textbf{Hypothesis H}] The flux function $A$ belongs to $C^2(\mathbb{R};\mathbb{R}^N)$ and its derivative $a:=A'$ is of polynomial growth with degree $q_0>1$. That is, there exists a constant $C(q_0)\geq0$ such that
     \begin{eqnarray}\label{qeq-22}
    |a(\xi)|\leq C(q_0)(1+|\xi|^{q_0}),\quad |a(\xi)-a(\zeta)|\leq \Upsilon(\xi,\zeta)|\xi-\zeta|,
      \end{eqnarray}
      where $\Upsilon(\xi,\zeta):=C(q_0)(1+|\xi|^{q_0-1}+|\zeta|^{q_0-1})$.

     Moreover, we assume that $g_k\in C(\mathbb{T}^N\times \mathbb{R})$ satisfies the following bounds
\begin{eqnarray}\label{e-5}
|g_k(x,u)|&\leq& C^0_k, \quad \sum_{k\geq 1}|C^0_k|^2\leq D_0,\\
\label{e-6}
|g_k(x,u)-g_k(y,v)|&\leq& C^1_k(|x-y|+|u-v|),\quad \sum_{k\geq 1}|C^1_{k}|^2\leq \frac{D_1}{2},
\end{eqnarray}

for $x, y\in \mathbb{T}^N, u,v\in \mathbb{R}$, where $C^0_k, C^1_k, D_0, D_1$ are positive constants.
\end{description}
The hypothesis H implies  that
\begin{eqnarray}\label{equ-28}
G^2(x,u):=\sum_{k\geq 1}|g_k(x,u)|^2&\leq& D_0,\\
\label{equ-29}
\sum_{k\geq 1}|g_k(x,u)-g_k(y,v)|^2&\leq& D_1\Big(|x-y|^2+|u-v|^2\Big).
\end{eqnarray}

\subsection{Kinetic solution}
Let us recall the notion of a kinetic solution to equation (\ref{P-19}) from \cite{D-V-1}.
\begin{dfn}(Kinetic measure)\label{dfn-3}
 A map $m$ from $\Omega$ to the set of non-negative, finite measures over $\mathbb{T}^N\times [0,T]\times\mathbb{R}$ is said to be a kinetic measure, if
\begin{description}
  \item[1.] $ m $ is measurable, that is, for each $\phi\in C_b(\mathbb{T}^N\times [0,T]\times \mathbb{R}), \langle m, \phi \rangle: \Omega\rightarrow \mathbb{R}$ is measurable,
  \item[2.] $m$ vanishes for large $\xi$, i.e.,
\begin{eqnarray}\label{equ-37}
\lim_{R\rightarrow +\infty}\mathbb{E}[m(\mathbb{T}^N\times [0,T]\times B^c_R)]=0,
\end{eqnarray}
where $B^c_R:=\{\xi\in \mathbb{R}, |\xi|\geq R\}$,
  \item[3.] for every $\phi\in C_b(\mathbb{T}^N\times \mathbb{R})$, the process
\[
(\omega,t)\in\Omega\times[0,T]\mapsto \langle m,\phi\rangle([0,t]):= \int_{\mathbb{T}^N\times [0,t]\times \mathbb{R}}\phi(x,\xi)dm(x,s,\xi)\in\mathbb{R}
\]
is predictable.
\end{description}
\end{dfn}
\begin{remark}\label{r-1}
  For any $\phi\in C_b(\mathbb{T}^N\times \mathbb{R})$ and kinetic measure $m$, define $A_t:=\langle m, \phi\rangle([0,t]),$
   then a.s., $t\mapsto A_t$ is a right continuous function of finite variation. Moreover, the function $A$ has left limits at any point $t\in (0,T]$. We write $A_{t^-}=\lim_{s\uparrow t}A_s$ and set $A_{0^-}=0$.
  As a result, $A_{t^-}=\langle m, \phi\rangle([0,t))$, which is c\`{a}gl\`{a}d (left continuous with right limits).

\end{remark}

\begin{dfn}(Kinetic solution)\label{dfn-1}
Let $\eta\in L^{\infty}(\mathbb{T}^N)$. A measurable function $u: \mathbb{T}^N\times [0,T]\times\Omega\rightarrow \mathbb{R}$ is called a kinetic solution to (\ref{P-19}) with initial datum $\eta$, if
\begin{description}
  \item[1.] $(u(t))_{t\in[0,T]}$ is predictable,
  \item[2.] for any $p\geq1$, there exists $C_p\geq0$ such that
\begin{eqnarray}\label{qq-8}
\mathbb{E}\left(\underset{0\leq t\leq T}{{\rm{ess\sup}}}\ \|u(t)\|^p_{L^p(\mathbb{T}^N)}\right)\leq C_p,
\end{eqnarray}
\item[3.] there exists a kinetic measure $m$ such that $f:= I_{u>\xi}$ satisfies: for all $\varphi\in C^1_c(\mathbb{T}^N\times [0,T)\times \mathbb{R})$,
\begin{eqnarray}\label{q-2.1}
&&\int^T_0\langle f(t), \partial_t \varphi(t)\rangle dt+\langle f_0, \varphi(0)\rangle +\int^T_0\langle f(t), a(\xi)\cdot \nabla \varphi (t)\rangle dt\\ \notag
&=& -\sum_{k\geq 1}\int^T_0\int_{\mathbb{T}^N} g_k(x, u(t,x))\varphi (x,t,u(x,t))dxd\beta_k(t) \\
\notag
&& -\frac{1}{2}\int^T_0\int_{\mathbb{T}^N}\partial_{\xi}\varphi (x,t,u(x,t))G^2(x,u(t,x))dxdt+ m(\partial_{\xi} \varphi), \ a.s. ,
\end{eqnarray}
where $f_0=I_{\eta>\xi}$, $u(t)=u(\cdot,t,\cdot)$ and $G^2=\sum^{\infty}_{k=1}|g_k|^2$.
\end{description}
\end{dfn}

Let $(X,\lambda)$ be a finite measure space. For some measurable function $u: X\rightarrow \mathbb{R}$, define $f: X\times \mathbb{R}\rightarrow [0,1]$ by $f(z,\xi)=I_{u(z)>\xi}$ a.e.
we use $\bar{f}:=1-f$ to denote its conjugate function. Define $\Lambda_f(z,\xi):=f(z,\xi)-I_{0>\xi}$, which can be viewed as a correction to $f$. Note that $\Lambda_f$ is integrable on $X\times \mathbb{R}$ if $u$ is.
\vskip 0.3cm

It is shown in \cite{D-V-1} that almost surely, for each kinetic solution $u$, the function $f=I_{u(x,t)>\xi}$ admits left and right weak limits at any point $t\in[0,T]$, and the weak form (\ref{q-2.1}) satisfied by a kinetic solution can be strengthened to be weak only respect to $x$ and $\xi$. More precisely,  the following results are obtained.
\begin{prp}(\cite{D-V-1}, Left and right weak limits)\label{prp-3} Let $u$ be a kinetic solution to (\ref{P-19}) with initial value $\eta$. Then $f=I_{u(x,t)>\xi}$  admits, almost surely, left and right limits respectively at every point $t\in [0,T]$. More precisely, for any  $t\in [0,T]$, there exist kinetic functions $f^{t\pm}$ on $\Omega\times \mathbb{T}^N\times \mathbb{R}$ such that $\mathbb{P}-$a.s.
\begin{eqnarray}\label{e-50}
\langle f(t-r),\varphi\rangle\rightarrow \langle f^{t-},\varphi\rangle
\end{eqnarray}
and
\begin{eqnarray}\label{e-51}
\langle f(t+r),\varphi\rangle\rightarrow \langle f^{t+},\varphi\rangle
\end{eqnarray}
as $r\rightarrow 0$ for all $\varphi\in C^1_c(\mathbb{T}^N\times \mathbb{R})$. Moreover, almost surely,
\[
\langle f^{t+}-f^{t-}, \varphi\rangle=-\int_{\mathbb{T}^N\times[0,T]\times \mathbb{R}}\partial_{\xi}\varphi(x,\xi)I_{\{t\}}(s)dm(x,s,\xi).
\]
In particular, almost surely, the set of $t\in [0,T]$ fulfilling $f^{t+}\neq f^{t-}$ is countable.
\end{prp}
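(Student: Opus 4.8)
The plan is to reduce this statement about weak limits of the kinetic function to a one-dimensional question on the time regularity of the scalar processes $t\mapsto\langle f(t),\psi\rangle$ for fixed test functions $\psi$, and then to upgrade the information obtained for each fixed $\psi$ to a genuine weak-$*$ limit by a compactness argument. First I would fix $\psi\in C^1_c(\mathbb{T}^N\times\mathbb{R})$ and specialise the kinetic formulation \eqref{q-2.1} to product test functions $\varphi(x,t,\xi)=\psi(x,\xi)\theta(t)$ with $\theta\in C^1_c([0,T))$. Reading off the resulting identity as a distributional ODE on $(0,T)$, the map $t\mapsto\langle f(t),\psi\rangle$ is governed by
\begin{align*}
d\langle f(t),\psi\rangle &= \langle f(t), a(\xi)\cdot\nabla\psi\rangle\,dt + \frac{1}{2}\int_{\mathbb{T}^N}\partial_\xi\psi(x,u(t,x))G^2(x,u(t,x))\,dx\,dt \\
&\quad + \sum_{k\geq 1}\int_{\mathbb{T}^N} g_k(x,u(t,x))\psi(x,u(t,x))\,dx\,d\beta_k(t) - d\big(\langle m,\partial_\xi\psi\rangle([0,t])\big),
\end{align*}
with initial value $\langle f_0,\psi\rangle$. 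The first two terms are absolutely continuous in $t$, the third is a continuous $L^2$-martingale by the uniform bound \eqref{e-5} on the $g_k$, and the last term is, by Remark \ref{r-1}, right-continuous of finite variation with left limits. Hence $t\mapsto\langle f(t),\psi\rangle$ admits a c\`adl\`ag modification $J_\psi$ whose only discontinuities come from the atoms in time of $m$, with jump $J_\psi(t^+)-J_\psi(t^-)=-\langle m,\partial_\xi\psi\rangle(\{t\})$.

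Next I would perform this construction simultaneously for a countable family $(\psi_n)$ chosen inside $C^1_c(\mathbb{T}^N\times\mathbb{R})$ and dense in $L^1(\mathbb{T}^N\times\mathbb{R})$, discarding a single $\mathbb{P}$-null set off which every $J_{\psi_n}$ is c\`adl\`ag. Fix $t\in[0,T]$. Since $0\leq f(s)\leq 1$, the family $\{f(t\pm r)\}_r$ is bounded in $L^\infty(\mathbb{T}^N\times\mathbb{R})=(L^1)^{*}$, so every sequence $r\downarrow 0$ has a weak-$*$ convergent subsequence. Because $\langle f(t\pm r),\psi_n\rangle=J_{\psi_n}(t\pm r)\to J_{\psi_n}(t^\pm)$ for each $n$ (choosing $r$ outside the Lebesgue-null set where $J_{\psi_n}$ and the original pairing differ), all subsequential weak-$*$ limits agree on the $L^1$-dense family $(\psi_n)$, hence coincide. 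This forces the full weak-$*$ limits $f^{t\pm}$ to exist and to satisfy $\langle f^{t\pm},\psi\rangle=\lim_{r\downarrow 0}\langle f(t\pm r),\psi\rangle$ for every $\psi$. Passing to the limit preserves $0\leq f^{t\pm}\leq 1$ and monotonicity in $\xi$, and the uniform moment bound \eqref{qq-8} keeps the associated Young measures tight, so no mass escapes to $\xi=\pm\infty$ and $f^{t\pm}$ are genuine kinetic functions. Combining this with the jump relation for $J_\psi$ gives $\langle f^{t+}-f^{t-},\psi\rangle=-\int_{\mathbb{T}^N\times[0,T]\times\mathbb{R}}\partial_\xi\psi(x,\xi)I_{\{t\}}(s)\,dm(x,s,\xi)$, first for $\psi=\psi_n$ and then, by density, for all $\psi\in C^1_c(\mathbb{T}^N\times\mathbb{R})$.

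Finally, the countability of the jump set follows from the finiteness of $m$: since $\sum_{t\in[0,T]}m(\mathbb{T}^N\times\{t\}\times\mathbb{R})\leq m(\mathbb{T}^N\times[0,T]\times\mathbb{R})<\infty$ almost surely, at most countably many times carry an atom of $m$, and by the jump formula these are exactly the $t$ for which $f^{t+}\neq f^{t-}$.

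The main obstacle I anticipate is organisational rather than computational, namely arranging a single almost-sure event on which the c\`adl\`ag property, and thus the existence of the limits, holds simultaneously for \emph{all} $t\in[0,T]$ and all test functions (the order of the quantifiers ``a.s., for every $t$'' is delicate), together with the verification that the weak-$*$ limits are true kinetic functions, i.e.\ that the limiting Young measures do not lose mass at infinity. It is precisely here that the integrability supplied by \eqref{qq-8} is essential.
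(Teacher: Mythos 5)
Your proposal is correct, but note that the paper itself contains no proof of this proposition: it is imported verbatim from \cite{D-V-1} (Proposition 8 there), where it is established by essentially the argument you give — product test functions $\psi(x,\xi)\theta(t)$ reducing the kinetic formulation to scalar processes consisting of a continuous part (drift plus square-integrable martingale) minus the c\`adl\`ag function $t\mapsto\langle m,\partial_{\xi}\psi\rangle([0,t])$, then a countable dense family of test functions, weak-$*$ compactness of the uniformly bounded $f$, the jump formula from the atomic part of $m$, and countability of the jump set from the a.s.\ finiteness of $m$. The one caveat you flag — that the one-sided limits are obtained along $r$ outside a Lebesgue-null set, i.e.\ as essential limits, rather than for every $r$ as the literal statement suggests — is present at the same level of precision in \cite{D-V-1}, so your proposal matches the cited proof both in route and in rigor.
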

For the function $f=I_{u(x,t)>\xi}$ in Proposition \ref{prp-3}, define $f^{\pm}$ by $f^{\pm}(t)=f^{t \pm}$, $t\in [0,T]$. Since we are dealing with the filtration associated to Brownian motion, both $f^{+}$ and $f^{-}$ are  clearly predictable as well. Also $f=f^+=f^-$ almost everywhere in time and we can take any of them in an integral with respect to the Lebesgue measure or in a stochastic integral. However, if the integral is with respect to a measure, typically a kinetic measure in this article, the integral is not well-defined for $f$ and may differ if one chooses $f^+ $ or $f^-$.

The following result was proved in  \cite{D-V-1}.
 \begin{lemma}\label{lem-1}
 The weak form (\ref{q-2.1}) satisfied by $f= I_{u>\xi}$ can be strengthened to be weak only respect to $x$ and $\xi$. Concretely, for all $t\in [0,T)$ and $\varphi\in C^1_c(\mathbb{T}^N\times \mathbb{R})$, $f= I_{u>\xi}$ satisfies
 \begin{eqnarray}\notag
\langle f^+(t),\varphi\rangle&=&\langle f_{0}, \varphi\rangle+\int^t_0\langle f(s), a(\xi)\cdot \nabla \varphi\rangle ds\\
\notag
&&+\sum_{k\geq 1}\int^t_0\int_{\mathbb{T}^N}\int_{\mathbb{R}}g_k(x,\xi)\varphi(x,\xi)d\nu_{x,s}(\xi)dxd\beta_k(s)\\
\label{qq-17}
&& +\frac{1}{2}\int^t_0\int_{\mathbb{T}^N}\int_{\mathbb{R}}\partial_{\xi}\varphi(x,\xi)G^2(x,\xi)d\nu_{x,s}(\xi)dxds- \langle m,\partial_{\xi} \varphi\rangle([0,t]), \quad a.s.,
\end{eqnarray}
 and we set $f^+(T)=f(T)$.
 \end{lemma}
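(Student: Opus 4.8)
The plan is to recover the fixed-time formulation (\ref{qq-17}) from the space-time weak form (\ref{q-2.1}) by testing against functions that factorize as $\varphi(x,\xi)\phi(s)$ and then letting $\phi$ collapse onto the indicator of a time interval $[0,t]$. Fix $t\in[0,T)$ and $\varphi\in C^1_c(\mathbb{T}^N\times\mathbb{R})$. For small $\varepsilon>0$ with $t+\varepsilon<T$, choose $\phi_\varepsilon\in C^1_c([0,T))$ approximating $I_{[0,t]}$: take $\phi_\varepsilon\equiv 1$ on $[0,t]$, $\phi_\varepsilon\equiv 0$ on $[t+\varepsilon,T)$, and interpolate, so that $\phi_\varepsilon'$ is supported in $(t,t+\varepsilon)$, $\int_0^T\phi_\varepsilon'\,ds=-1$, and $\phi_\varepsilon'\rightharpoonup-\delta_t$. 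Since every term in (\ref{q-2.1}) is continuous under Lipschitz-in-time perturbations of the test function, one may work with the piecewise-linear choice $\phi_\varepsilon'=-\varepsilon^{-1}I_{(t,t+\varepsilon)}$ (after the harmless density step extending (\ref{q-2.1}) to test functions that are only Lipschitz in $s$). Substituting $\varphi(x,\xi)\phi_\varepsilon(s)$ into (\ref{q-2.1}) produces an identity among time-averaged quantities, and it remains to pass to the limit $\varepsilon\to0$ termwise.

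The four non-stochastic terms are routine. The time-derivative term equals $\int_0^T\phi_\varepsilon'(s)\langle f(s),\varphi\rangle\,ds=-\varepsilon^{-1}\int_t^{t+\varepsilon}\langle f(s),\varphi\rangle\,ds$, which, read as a Ces\`aro mean of $r\mapsto\langle f(t+r),\varphi\rangle$, converges by the right weak limit of Proposition \ref{prp-3} (equation (\ref{e-51})) to $-\langle f^{t+},\varphi\rangle=-\langle f^+(t),\varphi\rangle$. The initial term is constant in $\varepsilon$ and equals $\langle f_0,\varphi\rangle$ because $\phi_\varepsilon(0)=1$. The drift term and the It\^o-correction term converge by dominated convergence, the integrands being bounded in $s$ since $\nabla\varphi$ and $\partial_\xi\varphi$ have compact $\xi$-support and $G^2\leq D_0$. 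For the kinetic-measure term, $\phi_\varepsilon\downarrow I_{[0,t]}$ pointwise and $m$ is finite, so $m(\phi_\varepsilon\,\partial_\xi\varphi)\to\langle m,\partial_\xi\varphi\rangle([0,t])$ over the \emph{closed} interval, consistent with the convention of Remark \ref{r-1}. Writing $\nu_{x,s}=\delta_{u(x,s)}$, so that $g_k(x,u(x,s))=\int_{\mathbb{R}}g_k(x,\xi)\,d\nu_{x,s}(\xi)$ and likewise for the $G^2$-term, the collected limits rearrange exactly into (\ref{qq-17}).

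The genuinely delicate point is the stochastic term $-\sum_k\int_0^T\phi_\varepsilon(s)\big(\int_{\mathbb{T}^N}g_k(x,u)\varphi(x,u)\,dx\big)\,d\beta_k(s)$, where convergence is not pathwise. Here I would use that $\phi_\varepsilon\to I_{[0,t]}$ in $L^2([0,T])$ and that the integrands $s\mapsto\int_{\mathbb{T}^N}g_k(x,u)\varphi(x,u)\,dx$ are predictable and bounded uniformly in $s$, with an $\ell^2$-summable bound controlled by $C^0_k$ from (\ref{e-5}) and the compact support of $\varphi$; It\^o's isometry then gives convergence of the series of stochastic integrals in $L^2(\Omega)$ to $-\sum_k\int_0^t(\cdots)\,d\beta_k(s)$. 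Since all remaining terms converge almost surely, passing to the limit along a subsequence on which the stochastic term converges a.s.\ yields (\ref{qq-17}) almost surely for this fixed pair $(t,\varphi)$.

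To obtain the statement for \emph{all} $t\in[0,T)$ and \emph{all} $\varphi$ on a single full-measure event, I would fix a countable family $\{\varphi_j\}$ dense in $C^1_c(\mathbb{T}^N\times\mathbb{R})$ together with a countable dense set of times, intersect the corresponding null sets, and then extend in $t$ using the right-continuity from Proposition \ref{prp-3} (the right-hand side of (\ref{qq-17}) is c\`adl\`ag in $t$, being a sum of terms continuous in $t$ and the right-continuous finite-variation process $\langle m,\partial_\xi\varphi\rangle([0,t])$ of Remark \ref{r-1}), and extend in $\varphi$ by density. I expect the main obstacle to be precisely this combination: passing to the limit in the stochastic integral while simultaneously upgrading the fixed-$(t,\varphi)$ identity to one valid for all $t$ and $\varphi$ on one event, which is exactly where the c\`adl\`ag regularity furnished by Proposition \ref{prp-3} becomes indispensable.
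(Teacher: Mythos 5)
Your proposal is correct and is essentially the canonical proof of this lemma: the paper itself offers no proof, deferring to \cite{D-V-1}, and the argument there is exactly yours --- test (\ref{q-2.1}) with $\varphi(x,\xi)\phi_\varepsilon(s)$ for $\phi_\varepsilon$ approximating $I_{[0,t]}$, identify the limit of the $\partial_t$-term via the right weak limits of Proposition \ref{prp-3}, use It\^o isometry (with the bounds (\ref{e-5})) for the martingale term and dominated convergence (finiteness of $m$, giving the closed interval $[0,t]$) for the remaining terms. The only cosmetic differences are that one can take $\phi_\varepsilon$ smooth from the start, avoiding your Lipschitz-density step, and that the lemma's ``a.s.'' may be read with the null set depending on $(t,\varphi)$, so your final upgrade to a single full-measure event, while sound, is not needed.
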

 Where $\nu_{x,s}(\xi)=-\partial_{\xi}f(x,s,\xi)=\delta_{u(x,s)=\xi}$.

\begin{remark}  By making  modification of the proof of Lemma \ref{lem-1}, we have for all $t\in (0,T]$ and $\varphi\in C^1_c(\mathbb{T}^N\times \mathbb{R})$, $f= I_{u>\xi}$ satisfies
   \begin{eqnarray}\notag
\langle f^-(t),\varphi\rangle&=&\langle f_{0}, \varphi\rangle+\int^t_0\langle f(s), a(\xi)\cdot \nabla \varphi\rangle ds\\
\notag
&&+\sum_{k\geq 1}\int^t_0\int_{\mathbb{T}^N}\int_{\mathbb{R}}g_k(x,\xi)\varphi(x,\xi)d\nu_{x,s}(\xi)dxd\beta_k(s)\\
\label{e-80}
&& +\frac{1}{2}\int^t_0\int_{\mathbb{T}^N}\int_{\mathbb{R}}\partial_{\xi}\varphi(x,\xi)G^2(x,\xi)d\nu_{x,s}(\xi)dxds- \langle m,\partial_{\xi} \varphi\rangle([0,t)), \quad a.s.,
\end{eqnarray}
and we set $ f^-(0)=f_0$.
\end{remark}

The following  well-posedness of (\ref{P-19}) was established  in \cite{D-V-1}.
\begin{thm}\label{thm-4}
(\cite{D-V-1}, Existence and Uniqueness) Let $\eta\in L^{\infty}(\mathbb{T}^N)$. Assume Hypothesis H holds, then there is a unique kinetic solution $u$ to equation (\ref{P-19}) with initial datum $\eta$.
\end{thm}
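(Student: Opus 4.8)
The plan is to recall the argument of Debussche and Vovelle \cite{D-V-1}, splitting the statement into two independent parts: uniqueness, which I would obtain through an $L^1$-contraction estimate established by the doubling of variables method, and existence, which I would obtain by a vanishing-viscosity (parabolic) approximation together with a compactness argument for the associated kinetic and defect measures. Since a kinetic solution is only determined $\mathbb{P}$-a.s. through its distributional kinetic formulation, every manipulation will be carried out at the level of Lemma \ref{lem-1} and its $f^-$ variant rather than on $u$ directly.

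First I would prove uniqueness, and more generally the $L^1$-contraction. Let $u_1,u_2$ be two kinetic solutions with initial data $\eta_1,\eta_2$, and write $f_1=I_{u_1>\xi}$ and $\bar f_2=1-I_{u_2>\zeta}$, with kinetic measures $m_1,m_2$. Using the strengthened kinetic formulation in two independent variable sets $(x,\xi)$ and $(y,\zeta)$, I would apply It\^o's formula to $t\mapsto \langle\!\langle f_1^+(t)\,\bar f_2^+(t),\varphi\rangle\!\rangle$ for a test function $\varphi=\varphi(x,\xi,y,\zeta)$ on the doubled space. Taking expectations annihilates the martingale contributions; the It\^o correction produced by the noise combines with the cross-variation of the two stochastic integrals, and by the regularity bound (\ref{equ-29}) this correction is controlled by a quantity quadratic in $|x-y|$ and $|\xi-\zeta|$. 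The two kinetic-measure terms enter with a favorable sign. I would then specialize $\varphi$ to a tensor product of approximate identities $\rho_\delta(x-y)\,\psi_\theta(\xi-\zeta)$ and let $\delta,\theta\to0$; since $\int_{\mathbb{R}} f_1\bar f_2\,d\xi=(u_1-u_2)^+$ on the diagonal, the limit yields
\[
\mathbb{E}\,\|(u_1(t)-u_2(t))^+\|_{L^1(\mathbb{T}^N)}\leq \|(\eta_1-\eta_2)^+\|_{L^1(\mathbb{T}^N)}.
\]
Symmetrizing in $(u_1,u_2)$ gives the full contraction, and the choice $\eta_1=\eta_2$ forces $u_1=u_2$.

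For existence I would regularize (\ref{P-19}) by a viscous term $\eps\Delta u^\eps\,dt$, obtaining a nondegenerate stochastic parabolic equation whose well-posedness is classical through the variational/monotone-operator method. I would then derive $\eps$-uniform a priori bounds of the type (\ref{qq-8}) by It\^o's formula together with (\ref{equ-28}), and control the parabolic defect measure $m^\eps=\eps|\nabla u^\eps|^2\,\delta_{u^\eps=\xi}$ uniformly in $\eps$ via the energy identity. These bounds give tightness of the laws of $(f^\eps,m^\eps)$; extracting a subsequence and passing to the limit, weakly in the measure variable and using the $L^p$ bounds to handle the nonlinear flux $a(\xi)$, produces a limiting Young measure $\nu_{x,t}$ and a nonnegative kinetic measure $m$. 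The final step is to show that $\nu_{x,t}$ is a Dirac mass $\delta_{u(x,t)=\xi}$, so that the limit is a genuine kinetic solution rather than a merely measure-valued one; this rigidity follows from the doubling-of-variables estimate applied at the level of generalized solutions.

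The main obstacle is the uniqueness/contraction step. Because there is no viscous smoothing in the limit equation, the kinetic solutions live in the rather irregular space $L^1([0,T];L^1(\mathbb{T}^N))$, and the doubling computation must carefully track the interaction between the two stochastic integrals and the It\^o cross-variation so that the resulting correction is genuinely absorbed by (\ref{equ-29}) and does not spoil the contraction. Keeping the correct one-sided limits $f^{\pm}$ throughout (as provided by Proposition \ref{prp-3}) and rigorously justifying the passage $\delta,\theta\to0$ in the doubled variables are the delicate points; once these are in place, the conclusion is a standard Gronwall-type argument.
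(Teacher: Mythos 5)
Your outline correctly reconstructs the argument the paper relies on: the paper itself gives no proof of this theorem but quotes it from \cite{D-V-1}, and your two-part strategy—uniqueness (together with the reduction of generalized solutions) via the doubling-of-variables $L^1$-contraction estimate with expectations killing the martingale terms and (\ref{equ-29}) absorbing the It\^o correction, plus existence via vanishing viscosity, uniform $L^p$ bounds, the parabolic defect measure $\eps|\nabla u^\eps|^2\delta_{u^\eps=\xi}$, and the rigidity step identifying the limiting Young measure as a Dirac mass—is exactly the Debussche--Vovelle proof. So the proposal is correct and takes essentially the same route as the paper's cited source.
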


\section{Transportation cost inequality}
%

Let $\mu$ be the law of the random field solution $u(\cdot, \cdot)$ of SPDE (\ref{P-19}), viewed as a probability measure on $L^1([0, T],L^1(\mathbb{T}^N))$. First we state a lemma which is essentially proved in \cite{KS} describing the probability measures $\nu$ that are absolutely continuous with respect to $\mu$.
\vskip 0.4cm
Let $\nu\ll \mu$ on $L^1([0, T],L^1(\mathbb{T}^N))$.
Define a new probability measure $\mathbb{Q}$ on the filtered probability space $(\Omega, {\cal F}, \{{\cal F}_{t}\}_{0\leq t\leq T}, \mathbb{P})$ by
\begin{align}\label{add 0303.1}
\mathrm{d}\mathbb{Q}:=\frac{\mathrm{d}\nu}{\mathrm{d}\mu}(u) \,\mathrm{d}\mathbb{P} .
\end{align}
Denote the Radon-Nikodym derivative restricted on ${\cal F}_t$ by
\[M_t:=\left. \frac{\mathrm{d}\mathbb{Q}}{\mathrm{d}\mathbb{P}}\right |_{{\cal F}_t}, \quad t\in [0, T].\]
Then $M_t, t\in [0, T]$ forms a $\mathbb{P}$-martingale. A variant of the  following result was proved in \cite{KS}.
\begin{lemma}
There exists an adapted stochastic process $h=\{h(s)=(h_1(s),h_2(s),\cdot\cdot\cdot,)\in l^2, s\geq 0\}$ such that $\mathbb{Q}-a.s.$ for all $t\in [0, T]$,
\begin{align*}
\int_0^t |h|_{l^2}^2(s)\,\mathrm{d}s<\infty
\end{align*}
and $\widetilde{\beta}_k: [0, T]\rightarrow \mathbb{R}$ defined by
\begin{align}\label{4.2}
\widetilde{\beta}_k(t):=\beta_k(t)-\int_0^t h_k(s)\,\mathrm{d}s,
\end{align}
are independent  Brownian motions under the measure $\mathbb{Q}$. Moreover,
\begin{align}\label{4.3}
M_t=\exp\left (\sum_{k=1}^{\infty}\int_0^th_k(s)\,\mathrm{d}\beta_k(s)-\frac{1}{2}\int_0^t |h|_{l^2}^2(s)\,\mathrm{d}s\right ), \quad \mathbb{Q}-a.s.,
\end{align}
and
\begin{align}\label{4.4}
H(\nu|\mu)=\frac{1}{2}\mathbb{E}^{\mathbb{Q}}\left[\int_0^T|h|_{l^2}^2(s)\,\mathrm{d}s\right],
\end{align}
where $\mathbb{E}^{\mathbb{Q}}$ stands for the expectation under the measure $\mathbb{Q}$.
\end{lemma}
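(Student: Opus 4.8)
The plan is to exploit that the filtration $\{\mathcal{F}_t\}_{t\in[0,T]}$ is generated by the independent Brownian motions $\{\beta_k\}_{k\geq1}$, so that the martingale representation theorem is available. Since $M_t$ is the restriction to $\mathcal{F}_t$ of a Radon-Nikodym density, it is a strictly positive, uniformly integrable $\mathbb{P}$-martingale with $\mathbb{E}^{\mathbb{P}}[M_T]=1$. A localization argument combined with the martingale representation theorem then produces a predictable, $l^2$-valued process $\phi=(\phi_k)_{k\geq1}$ such that
\[
M_t=1+\sum_{k\geq1}\int_0^t\phi_k(s)\,\mathrm{d}\beta_k(s).
\]
Setting $h_k(s):=\phi_k(s)/M_s$ on the set $\{M_s>0\}$, which carries full $\mathbb{Q}$-measure, recasts this as the linear stochastic differential equation $\mathrm{d}M_t=M_t\sum_{k\geq1}h_k(s)\,\mathrm{d}\beta_k(s)$, whose unique solution is precisely the Dol\'eans-Dade exponential in (\ref{4.3}). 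This step simultaneously defines the process $h$ and delivers the exponential representation of $M_t$.

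Next I would invoke Girsanov's theorem. Because $M_t$ is a genuine (not merely local) martingale with unit expectation, $\mathbb{Q}$ defined by (\ref{add 0303.1}) is a probability measure equivalent to $\mathbb{P}$ on $\mathcal{F}_T$, and under $\mathbb{Q}$ the shifted processes $\widetilde{\beta}_k(t)=\beta_k(t)-\int_0^t h_k(s)\,\mathrm{d}s$ of (\ref{4.2}) are independent Brownian motions. The $\mathbb{Q}$-a.s. finiteness of $\int_0^t|h|^2_{l^2}(s)\,\mathrm{d}s$ for every $t$ is inherited from the a.s. finiteness of the quadratic variation of the continuous local martingale $\log M_t$ under $\mathbb{Q}$, equivalently from the fact that $M_t$ stays strictly positive and finite $\mathbb{Q}$-a.s.

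To obtain the entropy identity (\ref{4.4}), I would begin from $H(\nu|\mu)=\mathbb{E}^{\mathbb{P}}[M_T\log M_T]=\mathbb{E}^{\mathbb{Q}}[\log M_T]$, using $\mathrm{d}\mathbb{Q}/\mathrm{d}\mathbb{P}=(\mathrm{d}\nu/\mathrm{d}\mu)(u)=M_T$. Taking the logarithm of (\ref{4.3}) and rewriting the It\^o integral in terms of the $\mathbb{Q}$-Brownian motions through $\beta_k=\widetilde{\beta}_k+\int_0^{\cdot}h_k\,\mathrm{d}s$ yields
\[
\log M_T=\sum_{k\geq1}\int_0^T h_k(s)\,\mathrm{d}\widetilde{\beta}_k(s)+\frac12\int_0^T|h|^2_{l^2}(s)\,\mathrm{d}s.
\]
The integral against $\widetilde{\beta}_k$ is a $\mathbb{Q}$-local martingale whose $\mathbb{Q}$-expectation vanishes after a suitable localization, leaving exactly $H(\nu|\mu)=\frac12\mathbb{E}^{\mathbb{Q}}[\int_0^T|h|^2_{l^2}(s)\,\mathrm{d}s]$.

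The main obstacle I anticipate is the integrability bookkeeping, since $M_t$ is a priori only uniformly integrable rather than $L^2$, and $\int_0^T|h|^2_{l^2}\,\mathrm{d}s$ need not be $\mathbb{Q}$-integrable when $H(\nu|\mu)=\infty$. I would first dispose of the trivial case $H(\nu|\mu)=\infty$, and otherwise run the stopping times $\tau_n=\inf\{t:\int_0^t|h|^2_{l^2}(s)\,\mathrm{d}s\geq n\}\wedge T$: establish the identity up to $\tau_n$, where every term is integrable and the stochastic integral has zero $\mathbb{Q}$-expectation, and then pass to the limit by monotone convergence on the quadratic-variation term. Since a variant of this statement is already available in \cite{KS}, the machinery is essentially standard, so the only genuinely delicate point is adapting it to the infinite sequence of noises and to the path space $L^1([0,T],L^1(\mathbb{T}^N))$ on which $\mu$ lives.
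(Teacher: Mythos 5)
The paper offers no proof of this lemma at all---it is stated as ``essentially proved in \cite{KS}''---so there is nothing internal to compare against; your argument is precisely the standard one underlying that citation (martingale representation in the Brownian filtration, passage to the Dol\'eans-Dade exponential, Girsanov, then the localized entropy computation), and it is correct. The only imprecisions are cosmetic: $\mathbb{Q}$ is merely absolutely continuous with respect to $\mathbb{P}$, not equivalent to it (Girsanov applies all the same, and your restriction to the full-$\mathbb{Q}$-measure set $\{M_s>0\}$ already handles this), and under $\mathbb{Q}$ the process $\log M_t$ is a semimartingale rather than a local martingale, though its quadratic variation is indeed $\int_0^{\cdot}|h|^2_{l^2}(s)\,\mathrm{d}s$, which is the fact your finiteness argument needs.
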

\begin{thm}\label{thm-5}
Let $\eta\in L^{\infty}(\mathbb{T}^N)$. Assume Hypothesis H holds. Then the law $\mu$ of the solution of the stochastic conservation law (\ref{P-19}) satisfies the quadratic transportation cost inequality on the space $L^1([0,T],L^1(\mathbb{T}^N))$.
\end{thm}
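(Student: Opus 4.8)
The plan is to combine the Girsanov--coupling strategy of \cite{DGW,KS} with the kinetic/doubling-of-variables technique of \cite{D-V-1}, the point being that two $L^1$-valued kinetic solutions cannot be subtracted directly, so the comparison must be done at the level of their kinetic functions. First I would dispose of the trivial case: if $H(\nu|\mu)=+\infty$ there is nothing to prove, so assume $\nu\ll\mu$ and invoke Lemma 3.1, which furnishes the measure $\mathbb{Q}$ of (\ref{add 0303.1}), the drift $h$, the $\mathbb{Q}$-Brownian motions $\widetilde{\beta}_k$ of (\ref{4.2}), and the entropy identity (\ref{4.4}). The key observation is that the law of $u$ (the solution of (\ref{P-19}) built on the original $\beta_k$) under $\mathbb{Q}$ is exactly $\nu$, and that, substituting $d\beta_k=d\widetilde{\beta}_k+h_k\,dt$, under $\mathbb{Q}$ the field $u$ is a kinetic solution of the perturbed equation
\[
du+\mathrm{div}\,A(u)\,dt=\sum_{k\geq1}g_k(x,u)\,d\widetilde{\beta}_k(t)+\sum_{k\geq1}g_k(x,u)h_k(t)\,dt,
\]
driven by the $\mathbb{Q}$-Wiener processes $\widetilde{\beta}_k$ and carrying the extra drift $\sum_k g_k(x,u)h_k$.

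Next I would introduce the natural coupling. Let $v$ be the unique (by Theorem \ref{thm-4}) kinetic solution, under $\mathbb{Q}$, of the unperturbed equation (\ref{P-19}) driven by the $\widetilde{\beta}_k$ with the same initial datum $\eta$; since the solution map is a measurable functional of the driving noise, $v$ has law $\mu$ under $\mathbb{Q}$, so $(u,v)$ is a coupling of $\nu$ and $\mu$. By the definition of $W_2$ and Jensen's inequality,
\[
W_2(\nu,\mu)^2\leq \mathbb{E}^{\mathbb{Q}}\Big[\Big(\int_0^T\|u(t)-v(t)\|_{L^1(\mathbb{T}^N)}\,dt\Big)^2\Big]\leq T\int_0^T\mathbb{E}^{\mathbb{Q}}\big[\|u(t)-v(t)\|_{L^1(\mathbb{T}^N)}^2\big]\,dt,
\]
so everything reduces to the second-moment bound $\mathbb{E}^{\mathbb{Q}}[\|u(t)-v(t)\|_{L^1(\mathbb{T}^N)}^2]\lesssim \mathbb{E}^{\mathbb{Q}}\int_0^T|h|_{l^2}^2(s)\,ds$.

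The core of the argument is the doubling-of-variables computation. Writing $f_1=I_{u>\xi}$ and $f_2=I_{v>\zeta}$ and using the strengthened kinetic formulation (Lemma \ref{lem-1} and its remark, together with the left/right limits of Proposition \ref{prp-3}), I would apply Itô's product formula to $f_1^+(x,t,\xi)\,\bar{f}_2^+(y,t,\zeta)$ and to the symmetric term, test against a product mollifier $\varrho_\delta(x-y)\psi_\varepsilon(\xi-\zeta)$, and let $\delta,\varepsilon\to0$ to pass to the diagonal. Using $\int_{\mathbb{R}}|I_{u>\xi}-I_{v>\xi}|\,d\xi=|u-v|$, this produces a semimartingale decomposition of $\mathcal{D}(t):=\|u(t)-v(t)\|_{L^1(\mathbb{T}^N)}$ whose ingredients are: the initial contribution, which vanishes since $u(0)=v(0)=\eta$; the flux error term, controlled via the polynomial growth bound (\ref{qeq-22}) on $a$; the nonpositive contributions of the two kinetic measures; the drift term coming from $h$, of the form $\int_0^t\sum_k h_k(s)\langle g_k,\cdots\rangle\,ds$; and a martingale $M_t$ generated by the common noise $\sum_k g_k\,d\widetilde{\beta}_k$ appearing in both equations.

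Finally I would pass from $\mathcal{D}(t)$ to $\mathcal{D}(t)^2$. Applying Itô's formula to $\mathcal{D}(t)^2$ and taking $\mathbb{E}^{\mathbb{Q}}$, the measure terms retain their favorable sign (they can only decrease $\mathcal{D}$) and are discarded; the $h$-drift term is split by Young's inequality as $\tfrac12\mathbb{E}^{\mathbb{Q}}\int_0^t\mathcal{D}(s)^2\,ds+C\,\mathbb{E}^{\mathbb{Q}}\int_0^t|h|_{l^2}^2\,ds$ after invoking the uniform bound (\ref{equ-28}) on $G^2$; and the quadratic variation $\langle M\rangle_t$, arising from the difference of the two noise contributions which share the same $\widetilde{\beta}_k$, is estimated through (\ref{equ-29}) to be $\lesssim\int_0^t\mathcal{D}(s)^2\,ds$. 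This closes the inequality $\mathbb{E}^{\mathbb{Q}}[\mathcal{D}(t)^2]\lesssim\int_0^t\mathbb{E}^{\mathbb{Q}}[\mathcal{D}(s)^2]\,ds+\mathbb{E}^{\mathbb{Q}}\int_0^T|h|_{l^2}^2\,ds$, whence Gronwall's lemma gives the second-moment bound uniformly in $t$, and combining with the reduction above and the entropy identity (\ref{4.4}) yields $W_2(\nu,\mu)^2\lesssim H(\nu|\mu)$, the quadratic transportation cost inequality. I expect the main obstacle to be the rigorous execution of the doubling and diagonal limit at the level of the \emph{second} moment: controlling the higher-order error terms uniformly as $\delta,\varepsilon\to0$, in particular the quadratic variation $\langle M\rangle_t$ of the martingale built from the common noise and the error produced by the polynomially growing nonlinearity $a$, so that the Gronwall inequality genuinely closes. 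This is precisely the difficulty flagged in the introduction as new, since the uniqueness proof of \cite{D-V-1} only requires the first moment, where the martingale disappears under expectation.
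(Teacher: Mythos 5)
Your setup coincides with the paper's own: the Girsanov coupling via Lemma 3.1, the identification of the original solution under $\mathbb{Q}$ as a kinetic solution of the $h$-perturbed equation, the auxiliary solution of the unperturbed equation driven by the $\widetilde{\beta}_k$ with law $\mu$ under $\mathbb{Q}$, and the reduction of $W_2(\nu,\mu)^2$ to a second-moment bound on $\int_0^T\|u^h(t)-u(t)\|_{L^1(\mathbb{T}^N)}dt$ in terms of $\mathbb{E}^{\mathbb{Q}}\int_0^T|h|^2_{l^2}(s)ds$ --- this is exactly (\ref{111.1}). The divergence, and the gap, lies in how you execute the doubling argument. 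You propose to pass to the diagonal limit first, obtaining ``a semimartingale decomposition of $\mathcal{D}(t)=\|u(t)-v(t)\|_{L^1}$,'' and then to apply It\^{o}'s formula to $\mathcal{D}(t)^2$. This step fails as stated: the two kinetic measures are not explicit objects that survive the limit; they are only sign-controlled and are discarded already at the mollified level (which is precisely why Proposition \ref{prp-1} is an inequality rather than an identity). What one can hope for in the limit is therefore an upper bound of $\mathcal{D}(t)$ by a semimartingale, not a semimartingale decomposition of $\mathcal{D}(t)$ itself; $\mathcal{D}$ is not known to be a semimartingale, so It\^{o}'s formula cannot be applied to $\mathcal{D}(t)^2$. (This particular defect is repairable --- since $\mathcal{D}(t)$ is nonnegative and dominated, one can square the dominating process instead --- but it is not what you wrote.)

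The deeper problem is that the diagonal limit itself is the hard analytic content, and your plan defers it rather than executes it. In the paper's estimates the flux error is of order $\delta\gamma^{-1}$ (times random $L^{q_0}$-norms) and the noise error of order $\gamma^2\delta^{-1}+\delta$; these blow up in each parameter separately, so the limit exists only along a coupled regime such as $\delta=\gamma^{4/3}$, and the a.s.\ vanishing of the error term $\mathcal{E}_t(\gamma,\delta)$ is obtained for fixed $t$ along sequences, not uniformly in $t$ --- yet your It\^{o}/Gronwall loop needs an inequality valid for all $t$ simultaneously, together with an identified limit martingale whose quadratic variation is controlled by $\int_0^t\mathcal{D}(s)^2ds$. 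The paper never constructs such limit objects: it keeps $(\gamma,\delta)$ fixed, applies the Burkholder inequality to $\sup_{s\le t}|\tilde{K}(s)|$, bounds the quadratic-variation integrand by $\gamma+4\delta+R(r)$ where $R$ is the mollified quantity itself, and closes a Gronwall inequality for $\mathbb{E}\big|\mathrm{ess\,sup}_{s\le t}R(s)\big|^2$ --- that is, Gronwall \emph{before} the limit, see (\ref{e-8})--(\ref{qq-10}). It then needs a second, auxiliary Gronwall bound (\ref{qq-12}) for the self-doubled quantity $Q(s)$ (doubling $u$ with itself, with $h=0$) in order to control $\mathcal{E}_s(\gamma,\delta)$ uniformly in time in $L^2(\Omega)$ through $|\mathcal{E}_s(\gamma,\delta)|\le 2Q(s)+12\delta$; only after all of this does it set $\delta=\gamma^{4/3}$ and let $\gamma\to0$ in the final inequality (\ref{e-26}). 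This uniform-in-time, second-moment control of the error term --- the ``higher order estimates'' advertised in the introduction --- is exactly the ingredient your proposal is missing; without it, or a rigorous substitute justifying your passage to the limit inside the stochastic calculus, the Gronwall argument does not close.
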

\noindent {\bf Proof}.
Take $\nu\ll \mu$ on $L^1([0, T],L^1(\mathbb{T}^N))$.
Let  $\mathbb{Q}$ be the probability measure defined as  in(\ref{add 0303.1}).
Let $h(t)$ be the corresponding stochastic process appeared in Lemma 3.1. Then, by the Girsanov theorem  the solution $u(t)$ of equation (\ref{P-19}) satisfies the following stochastic partial differential equation (SPDE) under the measure $\mathbb{Q}$,
\begin{eqnarray}\label{q-4.1}
\left\{
  \begin{array}{ll}
  du^h(t,x)+div A(u^h(t,x))dt=\sum_{k\geq 1}g_k(x,u^h(t,x)) d\widetilde{\beta}_k(t)+\sum_{k\geq 1}g_k(x,u^h(t,x))h_k(t)dt \quad {\rm{in}}\ \mathbb{T}^N\times (0,T],\\
u^h(\cdot,0)=\eta(\cdot) \quad {\rm{on}} \ \mathbb{T}^N.
  \end{array}
\right.
\end{eqnarray}
Similar to Lemma 2.1, we can show that the  kinetic solution $u^h$ satisfies that  for any $p\geq1$, there exists $C_p\geq0$ such that
\begin{eqnarray}\label{equation-1}
\mathbb{E}^{\mathbb{Q}}\left(\underset{0\leq t\leq T}{{\rm{ess\sup}}}\ \|u^h(t)\|^p_{L^p(\mathbb{T}^N)}\right)\leq C_p,
\end{eqnarray}
and there exists a kinetic measure $m^h$ such that
for all $t\in [0,T)$ and $\varphi\in C^1_c(\mathbb{T}^N\times \mathbb{R})$, $f:= I_{u^h>\xi}$ satisfies
 \begin{eqnarray}\notag
\langle f^+(t),\varphi\rangle&=&\langle f_{0}, \varphi\rangle+\int^t_0\langle f(s), a(\xi)\cdot \nabla \varphi\rangle ds\\
\notag
&& +\sum_{k\geq 1}\int^t_0\int_{\mathbb{T}^N}\int_{\mathbb{R}}g_k(x,\xi)\varphi(x,\xi)d\nu^h_{x,s}(\xi)dx d\widetilde{\beta}_k(s)\\ \notag
&& +\sum_{k\geq 1}\int^t_0\int_{\mathbb{T}^N}\int_{\mathbb{R}} g_k(x, \xi)\varphi (x,\xi)h_k(s)d\nu^h_{x,s}(\xi)dxds \\
\label{equation-2}
&& +\frac{1}{2}\int^t_0\int_{\mathbb{T}^N}\int_{\mathbb{R}}\partial_{\xi}\varphi(x,\xi)G^2(x,\xi)d\nu^h_{x,s}(\xi)dxds- \langle m^h,\partial_{\xi} \varphi\rangle([0,t]), \quad a.s.,
\end{eqnarray}
where $\nu^{h}_{x,s}(\xi)=-\partial_{\xi}f(x,s,\xi)=\delta_{u^{h}(x,s)=\xi}$ and we set $f^+(T)=f(T)$.

\vskip 0.4cm

Consider the solution of the following SPDE:
\begin{eqnarray}\label{q-4.2}
\left\{
  \begin{array}{ll}
  du(t,x)+div A(u(t,x))dt=\sum_{k\geq 1}g_k(x,u(t,x)) d\widetilde{\beta}_k(t) \quad {\rm{in}}\ \mathbb{T}^N\times (0,T],\\
u(\cdot,0)=\eta(\cdot) \quad {\rm{on}}\ \mathbb{T}^N.
  \end{array}
\right.
\end{eqnarray}
By Lemma 3.1, it follows that under the measure $\mathbb{Q}$, the law of $(u,u^h)$ forms a coupling of $(\mu, \nu)$. Therefore by the definition of the Wasserstein distance,
\[
W_2(\nu, \mu)^2\leq \mathbb{E}^{\mathbb{Q}}\left[\left |\int_0^T\int_{\mathbb{T}^N}|u(t,x)-u^h(t,x)|dtdx\right|^2 \right].
\]
In view of (\ref{4.4}), to prove the quadratic transportation cost inequality
\begin{align}
   W_2(\nu, \mu)\leq \sqrt{2C H(\nu|\mu)} ,
\end{align}
it is sufficient to show that
\begin{align}\label{111.1}
\mathbb{E}^{\mathbb{Q}}\left[\left |\int_0^T\int_{\mathbb{T}^N}|u(t,x)-u^h(t,x)|dtdx\right|^2 \right]
\leq C \mathbb{E}^{\mathbb{Q}}\left[\int_0^T|h|_{l^2}^2(s)\,\mathrm{d}s\right]
\end{align}
when the right hand side of (\ref{111.1}) is finite.
\vskip 0.3cm
For simplicity, in the sequel we still denote $\mathbb{E}^{\mathbb{Q}}$ by the symbol $\mathbb{E}$ and denote $\widetilde{\beta}_k$ by ${\beta}_k$.
The proof of (\ref{111.1}) is technical and lengthy. It is divided into the following two propositions.
\vskip 0.3cm
Following the idea of the proof Proposition 13 in \cite{D-V-1} and using the doubling variables method, we have the following result relating the two kinetic solution $u$ and $u^{h}$. As the proof is very similar to that of Proposition 13 in \cite{D-V-1}, we omit the proof and refer the reader to  \cite{D-V-1}.
\begin{prp}\label{prp-1}
Assume Hypothesis H is in place. Let $u$ and $u^{h}$ be the kinetic solution of (\ref{q-4.2}) and (\ref{q-4.1}) , respectively. Then, for all $0< t< T$, and non-negative test functions $\rho\in C^{\infty}(\mathbb{T}^N), \psi\in C^{\infty}_c(\mathbb{R})$, the corresponding functions $f_1(x,t,\xi):=I_{u^{h}(x,t)>\xi}$ and $f_2(y,t,\zeta):=I_{u(y,t)>\zeta}$ satisfy the following
\begin{eqnarray}\notag
&&\int_{(\mathbb{T}^N)^2}\int_{\mathbb{R}^2}\rho (x-y)\psi(\xi-\zeta)(f^{\pm}_1(x,t,\xi)\bar{f}^{\pm}_2(y,t,\zeta)+\bar{f}^{\pm}_1(x,t,\xi)f^{\pm}_2(y,t,\zeta))d\xi d\zeta dxdy\\
\notag
&\leq & \int_{(\mathbb{T}^N)^2}\int_{\mathbb{R}^2}\rho (x-y)\psi(\xi-\zeta)(f_{1,0}(x,\xi)\bar{f}_{2,0}(y,\zeta)+\bar{f}_{1,0}(x,\xi)f_{2,0}(y,\zeta))d\xi d\zeta dxdy\\
\label{eq-14-1}
&& +I(t)+J(t)+K(t)+H(t), \quad a.s.,
\end{eqnarray}
where
\begin{eqnarray*}
I(t)&=&\int^t_0\int_{(\mathbb{T}^N)^2}\int_{\mathbb{R}^2}(f_1\bar{f}_2+\bar{f}_1f_2)(a (\xi)-a(\zeta))\cdot\nabla_x\alpha d\xi d\zeta dxdyds,\\
J(t)&=&\int^t_0\int_{(\mathbb{T}^N)^2}\int_{\mathbb{R}^2}\alpha \sum_{k\geq 1}|g_k(x,\xi)-g_k(y,\zeta)|^2d\nu^{1}_{x,s}\otimes \nu^{2}_{y,s}(\xi,\zeta)dxdyds,\\
K(t)&=& 2\sum_{k\geq 1}\int^t_0\int_{(\mathbb{T}^N)^2}\int_{\mathbb{R}^2}(g_k(x,\xi)-g_k(y,\zeta))\rho(x-y)\chi(\xi,\zeta)d\nu^{1}_{x,s}\otimes \nu^{2}_{y,s}(\xi,\zeta)dxdyd\beta_k(s),\\
H(t)&=&2\sum_{k\geq 1}\int^t_0\int_{(\mathbb{T}^N)^2}\int_{\mathbb{R}^2}g_k(x,\xi)h_k(s)\rho(x-y)\chi(\xi,\zeta)d\nu^{1}_{x,s}\otimes \nu^{2}_{y,s}(\xi,\zeta)dxdyds,
\end{eqnarray*}
with $f_{1,0}(x,\xi)=I_{\eta(x)>\xi}, f_{2,0}(y,\zeta)=I_{\eta(y)>\zeta}$, $\alpha=\rho (x-y)\psi(\xi-\zeta)$, $\nu^{1}_{x,s}=-\partial_{\xi}f_1(s,x,\xi)=\delta_{u^{h}(x,s)=\xi}, \nu^{2}_{y,s}=\partial_{\zeta}\bar{f}_2(s,y,\zeta)=\delta_{u(y,s)=\zeta}$ and $\chi(\xi,\zeta)=\int^{\xi}_{-\infty}\psi(\xi'-\zeta)d\xi'=\int^{\xi-\zeta}_{-\infty}\psi(y)dy$.
\end{prp}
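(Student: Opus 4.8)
The plan is to reconstruct the stochastic doubling-of-variables argument of \cite{D-V-1}, now carried out for the two \emph{different} equations (\ref{q-4.1}) and (\ref{q-4.2}), which under the working measure are driven by the \emph{same} Brownian motions $\beta_k$ (only the extra drift $\sum_k g_k(x,u^h)h_k\,dt$ distinguishes them). First I would write the kinetic formulation (\ref{equation-2}) for $f_1=I_{u^h>\xi}$, tested against a function of $(x,\xi)$, and the analogue of (\ref{qq-17})--(\ref{e-80}) for $\bar f_2=1-I_{u>\zeta}$, tested against a function of $(y,\zeta)$. For fixed $(y,\zeta)$ the process $t\mapsto\bar f_2(y,t,\zeta)$ and for fixed $(x,\xi)$ the process $t\mapsto f_1(x,t,\xi)$ are semimartingales, so the quantity $\int_{(\mathbb{T}^N)^2}\int_{\mathbb{R}^2}\rho(x-y)\psi(\xi-\zeta)f_1(x,t,\xi)\bar f_2(y,t,\zeta)\,d\xi d\zeta dx dy$ can be expanded by the It\^o product rule $d(f_1\bar f_2)=\bar f_2\,df_1+f_1\,d\bar f_2+d[f_1,\bar f_2]$. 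Running the symmetric computation for $\bar f_1 f_2$ as well produces the full symmetric functional on the left-hand side of (\ref{eq-14-1}).

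Next I would match the resulting terms with $I,J,K,H$. The two transport contributions, from the $a(\xi)\cdot\nabla_x$ line of $f_1$ and the $a(\zeta)\cdot\nabla_y$ line of $f_2$, combine because $\nabla_y\rho(x-y)=-(\nabla\rho)(x-y)$; writing $\nabla_x\alpha=(\nabla\rho)(x-y)\psi(\xi-\zeta)$ they yield exactly $I(t)$ with the flux difference $a(\xi)-a(\zeta)$. The covariation $d[f_1,\bar f_2]$, computed from the common noise, produces the cross term $-2\sum_k g_k(x,\xi)g_k(y,\zeta)$, while the two It\^o corrections carrying $G^2(x,\xi)$ and $G^2(y,\zeta)$ (the $\partial_\xi$ and $\partial_\zeta$ lines of the two formulations) supply $\sum_k|g_k(x,\xi)|^2$ and $\sum_k|g_k(y,\zeta)|^2$; together they assemble the perfect square defining $J(t)$. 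The two martingale lines combine into $K(t)$ and the $h$-drift, present only in (\ref{q-4.1}), into $H(t)$. The one computation worth spelling out is that integrating $\psi(\xi-\zeta)$ against the indicator $\bar f_2$ in $\zeta$ (respectively against $f_1$ in $\xi$) and then localising by the Dirac mass $\nu^2$ (respectively $\nu^1$) converts $\psi$ into its primitive $\chi(\xi,\zeta)=\int_{-\infty}^{\xi-\zeta}\psi$; this is precisely why $K$ and $H$ carry $\chi$ rather than $\psi$.

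The inequality, as opposed to an identity, originates entirely in the kinetic-measure terms $-\langle m^h,\partial_\xi\varphi_1\rangle([0,t])$ and its $u$-counterpart, where $\varphi_1(x,\xi)=\int_{\mathbb{T}^N}\int_{\mathbb{R}}\rho(x-y)\psi(\xi-\zeta)\bar f_2(y,s,\zeta)\,d\zeta dy$. Integrating by parts in $\zeta$ and using $\partial_\zeta\bar f_2=\nu^2_{y,s}$ gives $\partial_\xi\varphi_1=\int_{\mathbb{T}^N}\int_{\mathbb{R}}\rho(x-y)\psi(\xi-\zeta)\,d\nu^2_{y,s}(\zeta)dy\ge0$ because $\rho,\psi\ge0$; since $m^h\ge0$ the whole term is $\le0$, and symmetrically for $m$. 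Discarding these two non-positive contributions, and keeping the initial datum term built from $f_{1,0},f_{2,0}$, is exactly what upgrades the equality to (\ref{eq-14-1}).

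The main obstacle is the rigorous justification of the product rule for these very irregular objects: $f_1,f_2$ are only indicator functions, so $df_1,d\bar f_2$ are distributions in $(x,\xi)$ and carry time atoms through the kinetic measures. I would treat this exactly as in \cite{D-V-1}: evaluate the products at the left/right weak limits $f_i^{\pm}$ furnished by Proposition \ref{prp-3}, apply the product rule on $[0,t]$ to the genuine semimartingales obtained after testing, and secure integrability of every term from the moment bounds (\ref{qq-8}) and (\ref{equation-1}) together with the polynomial growth of $a$ and the summability $\sum_k|C^0_k|^2,\sum_k|C^1_k|^2<\infty$ in Hypothesis H. The delicate point---and the reason the evaluation is taken at $f^{\pm}$ rather than at $f$---is the correct bookkeeping of the time atoms of $m^h$ and $m$; getting their sign right is what makes the measure terms droppable and is therefore the true heart of the argument.
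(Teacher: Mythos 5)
Your reconstruction is correct and is precisely the argument the paper relies on: the paper omits the proof of this proposition, citing Proposition 13 of \cite{D-V-1}, and your doubling-of-variables/It\^o product computation---the transport terms combining via $\nabla_y\rho(x-y)=-(\nabla\rho)(x-y)$ into $I(t)$, the It\^o corrections plus the covariation of the common noise assembling the perfect square in $J(t)$, the integration of $\psi$ against the indicators producing the primitive $\chi$ in $K(t)$ and $H(t)$, and the sign analysis $m^h, m\geq 0$ paired with $\partial_\xi\varphi_1\geq 0$ (resp.\ $\partial_\zeta\varphi_2\leq 0$) which lets the kinetic-measure terms be dropped---is exactly that proof, adapted to include the extra $h$-drift of (\ref{q-4.1}). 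Your deferral of the jump/time-atom bookkeeping to the treatment in \cite{D-V-1} matches (indeed exceeds) the level of detail the paper itself gives.
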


The statement (\ref{111.1}) is contained in the next proposition.
\begin{prp}\label{prp-2}
 For $T>0$, it holds that
\begin{eqnarray}
\mathbb{E}\Big|\int_0^T\|u^{h}(t)-u(t)\|_{L^1(\mathbb{T}^N)}dt\Big|^2\leq C\mathbb{E}\Big[\int^T_0|h|^2_{l^2}(t)dt\Big],
\end{eqnarray}
where $C=C(T,D_0,D_1)$.
\end{prp}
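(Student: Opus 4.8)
The plan is to start from the doubling-variables inequality (\ref{eq-14-1}) of Proposition \ref{prp-1} and to choose the test functions $\rho=\rho_\epsilon$ and $\psi=\psi_\delta$ to be standard nonnegative approximations of the Dirac mass, with $\rho_\epsilon$ even, $\int\rho_\epsilon=1$ and supported in $\{|x|\le\epsilon\}$, and $\psi_\delta$ supported in $\{|\xi|\le\delta\}$ with $\int\psi_\delta=1$. First I would pass to the limit $\epsilon,\delta\to0$ in the left-hand side and in the initial term. Since $f^\pm=f$ for a.e.\ $t$ and $\int_{\mathbb R}|I_{a>\xi}-I_{b>\xi}|\,d\xi=|a-b|$, the left-hand side converges to
\[
F(t):=\int_{\mathbb T^N}|u^h(x,t)-u(x,t)|\,dx=\|u^h(t)-u(t)\|_{L^1(\mathbb T^N)},
\]
while the initial term tends to $\int_{\mathbb T^N}|\eta-\eta|\,dx=0$ because both solutions start from the same datum $\eta$. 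The claim will then follow from an almost-sure inequality of the form $F(t)\le K_{\lim}(t)+H_{\lim}(t)$, modulo terms that vanish in the limit, where $K_{\lim}$ is a martingale and $H_{\lim}$ is the $h$-dependent drift contribution.

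Next I would dispose of the two error terms $I(t)$ and $J(t)$. For the It\^o-correction term $J(t)$ I would insert the bound (\ref{equ-29}); after integrating out $\nu^1\otimes\nu^2$ the integrand is controlled by $D_1\rho_\epsilon(x-y)\psi_\delta(\xi-\zeta)(|x-y|^2+|\xi-\zeta|^2)$, and using $|x-y|^2\rho_\epsilon\le\epsilon^2\rho_\epsilon$, $|\xi-\zeta|^2\psi_\delta\le\delta^2\psi_\delta$ on the respective supports together with $\iint\rho_\epsilon(x-y)\,dx\,dy=1$, one gets $|J(t)|\lesssim D_1(\epsilon^2+\delta^2)\|\psi_\delta\|_{\infty}T$, which vanishes on letting $\epsilon\to0$ and then $\delta\to0$. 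For the transport term $I(t)$ I would integrate by parts in $x$ via $\nabla_x\rho_\epsilon(x-y)=-\nabla_y\rho_\epsilon(x-y)$ and exploit the continuity and polynomial growth of $a$ together with the uniform moment bounds (\ref{qq-8}) and (\ref{equation-1}) to show $I(t)\to0$. Because $a$ has polynomial growth of degree $q_0>1$, controlling this term in second moment is exactly where the higher-order moment estimates beyond \cite{D-V-1} are needed. The drift term $H(t)$, by contrast, I would bound uniformly in $\epsilon,\delta$ rather than passing to a limit: since $0\le\chi(\xi,\zeta)\le1$, Cauchy--Schwarz in $k$ and (\ref{equ-28}) give $\sum_k|g_k(x,\xi)|\,|h_k(s)|\le\sqrt{D_0}\,|h(s)|_{l^2}$, so that $|H(t)|\le 2\sqrt{D_0}\int_0^t|h(s)|_{l^2}\,ds=:G(t)$, which is the term that produces the desired right-hand side.

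The crucial step is the martingale term $K(t)$. Passing to the limit (so $\chi\to I_{\xi>\zeta}$ and $y\to x$) and integrating out $\nu^1\otimes\nu^2$ yields the genuine martingale
\[
K_{\lim}(t)=2\sum_{k\ge1}\int_0^t\int_{\mathbb T^N}\big(g_k(x,u^h)-g_k(x,u)\big)I_{u^h>u}\,dx\,d\beta_k(s).
\]
Here I would deliberately avoid applying Cauchy--Schwarz in $x$, which would generate $\int_{\mathbb T^N}|u^h-u|^2\,dx$ and thus the $L^2$-norm, going the wrong way against the $L^1$ quantity $F$. Instead I would apply the Lipschitz bound (\ref{e-6}) inside the spatial integral first, getting $\big|\int_{\mathbb T^N}(g_k(x,u^h)-g_k(x,u))I_{u^h>u}\,dx\big|\le C_k^1 F(s)$; then by the It\^o isometry and $\sum_k|C_k^1|^2\le D_1/2$ one obtains $\mathbb E[K_{\lim}(t)^2]\le 2D_1\int_0^t\mathbb E[F(s)^2]\,ds$. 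This is the estimate that preserves the $L^1$-structure and makes Gr\"onwall applicable. Squaring $F(t)\le K_{\lim}(t)+G(t)$ and taking expectations then gives $\mathbb E[F(t)^2]\le 4D_1\int_0^t\mathbb E[F(s)^2]\,ds+2\mathbb E[G(t)^2]$ with $\mathbb E[G(t)^2]\le 4D_0T\,\mathbb E\int_0^T|h|_{l^2}^2\,ds$; Gr\"onwall yields $\mathbb E[F(t)^2]\lesssim \mathbb E\int_0^T|h|_{l^2}^2\,ds$ uniformly in $t$, and finally Jensen in $t$, $(\int_0^T F\,dt)^2\le T\int_0^T F^2\,dt$, produces the claim with an explicit $C=C(T,D_0,D_1)$.

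The hard part will be the rigorous justification of the joint limit $\epsilon,\delta\to0$ at the level of second moments, rather than merely almost surely or in first moment as in the uniqueness argument of \cite{D-V-1}. Concretely, showing that the flux error $I(t)$ and the It\^o-correction $J(t)$ vanish uniformly enough under only the polynomial growth of $a$ forces the new higher-order estimates, and the martingale term must be handled so that its limit and its quadratic-variation bound are compatible with the Gr\"onwall scheme; keeping the comparison anchored in the $L^1$-norm $F$ at every stage is what ties these pieces together.
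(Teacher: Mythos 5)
Your route is genuinely different from the paper's, and its core idea is sound. The paper never passes to the limit inside the stochastic integral: it keeps the regularization parameters fixed, bounds the supremum of the martingale term by the Burkholder inequality, closes a Gronwall estimate for the second moment of the regularized doubled quantity $R(s)$ itself (see (\ref{qq-10})), and only at the very end sends the parameters to zero along the coupled choice $\delta=\gamma^{4/3}$; this in turn forces the auxiliary quantity $Q(s)$ and the control of the error $\mathcal{E}_s(\gamma,\delta)$ at \emph{all} times (see (\ref{qq-12})--(\ref{qq-16})), and it yields the stronger uniform-in-time bound on $\mathbb{E}\big|\mathrm{ess\,sup}_s\|u^{h}(s)-u(s)\|_{L^1}\big|^2$. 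You instead take the limit first, obtain the clean pathwise inequality $F(t)\le K_{\lim}(t)+G(t)$, and run It\^o isometry plus Gronwall on $\mathbb{E}[F(t)^2]$, finishing with Jensen; this dispenses with Burkholder, with $Q$, and with the sup-in-time error control, since only the fixed-$t$ convergence (\ref{qq-4}) is needed. Your key martingale step --- applying the Lipschitz bound (\ref{e-6}) inside the spatial integral \emph{before} any Cauchy--Schwarz in $x$, so the quadratic variation is controlled by $F(s)^2$ and the $L^1$ structure is preserved --- is exactly the mechanism that makes the paper's proof work too (there it produces the bound $\gamma+4\delta+R(r)$).

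Two points in your write-up, however, are respectively wrong and missing. First, the order of limits is inconsistent. You kill $J$ by letting $\epsilon\to0$ first and then $\delta\to0$; but the flux term admits, in general only, the bound $|I(t)|\lesssim C(q_0)\,\delta\,\epsilon^{-1}\,t\,(1+\mbox{moments})$ --- the smallness comes from $|a(\xi)-a(\zeta)|\le\Upsilon(\xi,\zeta)\,\delta$ on the support of $\psi_\delta$, paid against $\int|\nabla_x\rho_\epsilon|\sim\epsilon^{-1}$ --- so $I\to0$ needs $\delta\ll\epsilon$, whereas your bound for $J$, of size $\epsilon^2\delta^{-1}+\delta$, needs $\epsilon^2\ll\delta$. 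Neither iterated limit satisfies both, and these are not artifacts of crude estimation: the factor $\int\psi_\delta\,d\nu^1\otimes\nu^2$ in $J$ is genuinely of size $\delta^{-1}$ where $u^{h}(x)$ and $u(y)$ lie within $\delta$ of each other. You must take a coupled limit with $\epsilon^2\ll\delta\ll\epsilon$, which is precisely the paper's choice $\delta=\gamma^{4/3}$. Second, the passage to the limit in the stochastic integral is asserted, not proved, and it is the crux of your scheme: you need the integrands to converge in $L^2(\Omega\times[0,T];l^2)$, so that $K_{\epsilon,\delta}(t)\to K_{\lim}(t)$ in $L^2(\Omega)$ and, along a subsequence, almost surely, letting the pathwise inequality survive. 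This can be done --- one checks that $(g_k(x,a)-g_k(x,b))\chi(a-b)$ is Lipschitz in $b$ \emph{uniformly in} $\delta$, so the error is $O(\epsilon)+O(\delta)$ plus $C^1_k$ times the $L^1$ modulus of continuity of translations of $u(\cdot,s)$, uniformly over the coupled regime, and then one applies dominated convergence using (\ref{qq-8}), (\ref{equation-1}) --- but without such a uniform argument your Gronwall iteration has no starting inequality. With these two repairs your proof goes through and is arguably leaner than the paper's.
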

\begin{proof}
Let $\rho_{\gamma}, \psi_{\delta}$ be approximations to the identity on $\mathbb{T}^N$ and $\mathbb{R}$, respectively. That is, let $\rho\in C^{\infty}(\mathbb{T}^N)$, $\psi\in C^{\infty}_c(\mathbb{R})$ be symmetric non-negative functions such as $\int_{\mathbb{T}^N}\rho =1$, $\int_{\mathbb{R}}\psi =1$ and supp$\psi \subset (-1,1)$. We define
\[
\rho_{\gamma}(x)=\frac{1}{\gamma^N}\rho\Big(\frac{x}{\gamma}\Big), \quad \psi_{\delta}(\xi)=\frac{1}{\delta}\psi\Big(\frac{\xi}{\delta}\Big).
\]
Letting $\rho:=\rho_{\gamma}(x-y)$ and $\psi:=\psi_{\delta}(\xi-\zeta)$ in Proposition \ref{prp-1}, we get from (\ref{eq-14-1}) that
\begin{eqnarray}\label{111.1-1}
&&\int_{(\mathbb{T}^N)^2}\int_{\mathbb{R}^2}\rho_{\gamma} (x-y)\psi_{\delta}(\xi-\zeta)(f^{\pm}_1(x,t,\xi)\bar{f}^{\pm}_2(y,t,\zeta)+\bar{f}^{\pm}_1(x,t,\xi)f^{\pm}_2(y,t,\zeta))d\xi d\zeta dxdy \nonumber\\
&\leq & \int_{(\mathbb{T}^N)^2}\int_{\mathbb{R}^2}\rho_{\gamma} (x-y)\psi_{\delta}(\xi-\zeta)(f_{1,0}(x,\xi)\bar{f}_{2,0}(y,\zeta)+\bar{f}_{1,0}(x,\xi)f_{2,0}(y,\zeta))d\xi d\zeta dxdy\nonumber\\
&&\  +\tilde{I}(t)+\tilde{J}(t)+\tilde{K}(t)+\tilde{H}(t),\quad a.s.,
\end{eqnarray}
where $\tilde{I}, \tilde{J}, \tilde{K}, \tilde{H}$ are the corresponding terms $I,J,K,H$ in the statement of Proposition \ref{prp-1} with $\rho$, $\psi$ replaced by $\rho_{\gamma}$, $\psi_{\delta}$, respectively. For simplicity, we still use the notation:
 \[
 \chi(\xi,\zeta)=\int_{-\infty}^{\xi-\zeta}\psi_{\delta}(y)dy.
 \]
With an eye on the following identity,
\begin{eqnarray}\label{111.1-2}
\int_{\mathbb{R}}I_{u^{h,\pm}>\xi}\overline{I_{u^{\pm}>\xi}}d\xi=(u^{h,\pm}-u^{\pm})^+,
\quad
\int_{\mathbb{R}}\overline{I_{u^{h,\pm}>\xi}}I_{u^{\pm}>\xi}d\xi=(u^{h,\pm}-u^{\pm})^-,
\end{eqnarray}
we will start with the estimate (\ref{111.1-1}) and eventually let $\gamma$, $\delta$ appropriately tend to zero to prove the proposition.
For any $t\in [0,T]$, define the error term
\begin{eqnarray}\notag
&&\mathcal{E}_t(\gamma,\delta)\\ \notag
&:=&\int_{(\mathbb{T}^N)^2}\int_{\mathbb{R}^2}(f^{\pm}_1(x,t,\xi)\bar{f}^{\pm}_2(y,t,\zeta)+\bar{f}^{\pm}_1(x,t,\xi){f}^{\pm}_2(y,t,\zeta))\rho_{\gamma}(x-y)\psi_{\delta}(\xi-\zeta)dxdyd\xi d\zeta\\
\label{qq-3}
&&-\int_{\mathbb{T}^N}\int_{\mathbb{R}}(f^{\pm}_1(x,t,\xi)\bar{f}^{\pm}_2(x,t,\xi)+\bar{f}^{\pm}_1(x,t,\xi)f^{\pm}_2(x,t,\xi))d\xi dx.
\end{eqnarray}
Using $\int_{\mathbb{R}}\psi_{\delta}(\xi-\zeta)d\zeta=1$, $\int^{\xi}_{\xi-\delta}\psi_{\delta}(\xi-\zeta)d\zeta=\frac{1}{2}$ and $\int_{(\mathbb{T}^N)^2}\rho_{\gamma}(x-y)dxdy\leq1$, we find that
\begin{eqnarray}\notag
&&\Big|\int_{(\mathbb{T}^N)^2}\int_{\mathbb{R}}\rho_{\gamma}(x-y)f^{\pm}_1(x,t,\xi)\bar{f}^{\pm}_2(y,t,\xi)d\xi dxdy\\ \notag
&&-\int_{(\mathbb{T}^N)^2}\int_{\mathbb{R}^2}f^{\pm}_1(x,t,\xi)\bar{f}^{\pm}_2(y,t,\zeta)\rho_{\gamma}(x-y)\psi_{\delta}(\xi-\zeta)dxdyd\xi d\zeta\Big|\\ \notag
&=&\Big|\int_{(\mathbb{T}^N)^2}\rho_{\gamma}(x-y)\int_{\mathbb{R}}I_{u^{h, \pm}(x,t)>\xi}\int_{\mathbb{R}}\psi_{\delta}(\xi-\zeta)(I_{u^{\pm}(y,t)\leq\xi}-I_{u^{\pm}(y,t)\leq \zeta})d\zeta d\xi dxdy\Big|\\ \notag
&\leq&\int_{(\mathbb{T}^N)^2}\int_{\mathbb{R}}\rho_{\gamma}(x-y)I_{u^{h,\pm}(x,t)>\xi}\int^{\xi}_{\xi-\delta}\psi_{\delta}(\xi-\zeta)I_{\zeta<u^{\pm}(y,t)\leq\xi} d\zeta d\xi dxdy\\ \notag
&&\ +\int_{(\mathbb{T}^N)^2}\int_{\mathbb{R}}\rho_{\gamma}(x-y)I_{u^{h,\pm}(x,t)>\xi}\int^{\xi+\delta}_{\xi}\psi_{\delta}(\xi-\zeta)I_{\xi<u^{\pm}(y,t)\leq\zeta} d\zeta d\xi dxdy\\ \notag
&\leq& \frac{1}{2}\int_{(\mathbb{T}^N)^2}\rho_{\gamma}(x-y)I_{{u^{h,\pm}(x,t)>u^{\pm}(y,t)}}\int^{min\{u^{h,\pm}(x,t),u^{\pm}(y,t)+\delta\}}_{u^{\pm}(y,t)}d\xi dxdy\\ \notag
&&\ +\frac{1}{2}\int_{(\mathbb{T}^N)^2}\rho_{\gamma}(x-y)I_{{u^{\pm}(y,t)-\delta<u^{h,\pm}(x,t)}}\int^{min\{u^{h,\pm}(x,t),u^{\pm}(y,t)\}}_{u^{\pm}(y,t)-\delta}d\xi dxdy\\ \notag
&=& \frac{\delta}{2}\int_{(\mathbb{T}^N)^2}\rho_{\gamma}(x-y) I_{{ u^{h,\pm}(x,t)>u^{\pm}(y,t)+\delta }}dxdy\\ \notag
&&+\frac{1}{2}\int_{(\mathbb{T}^N)^2}\rho_{\gamma}(x-y)I_{{u^{\pm}(y,t)< u^{h,\pm}(x,t)\leq u^{\pm}(y,t)+\delta }}(u^{h,\pm}(x,t)-u^{\pm}(y,t) dxdy\\ \notag
&&+\frac{\delta}{2}\int_{(\mathbb{T}^N)^2}\rho_{\gamma}(x-y)I_{{u^{\pm}(y,t)<u^{h,\pm}(x,t)}}dxdy\\ \notag
&&+\frac{1}{2}\int_{(\mathbb{T}^N)^2}\rho_{\gamma}(x-y)I_{{u^{\pm}(y,t)-\delta<u^{h,\pm}(x,t)\leq u^{\pm}(y,t)}}(u^{h,\pm}(x,t)-u^{\pm}(y,t)+\delta) dxdy\\
\label{e-23}
&\leq & 2\delta, \quad a.s..
\end{eqnarray}
Similarly, we have
\begin{eqnarray}\notag
&&\Big|\int_{(\mathbb{T}^N)^2}\int_{\mathbb{R}}\rho_{\gamma}(x-y)\bar{f}^{\pm}_1(x,t,\xi){f}^{\pm}_2(y,t,\xi)d\xi dxdy\\
\label{e-22}
&&-\int_{(\mathbb{T}^N)^2}\int_{\mathbb{R}^2}\bar{f}^{\pm}_1(x,t,\xi){f}^{\pm}_2(y,t,\zeta)\rho_{\gamma}(x-y)\psi_{\delta}(\xi-\zeta)dxdyd\xi d\zeta\Big|
\leq  2\delta, \quad a.s..
\end{eqnarray}
Moreover, when $\gamma$ is small enough, it follows that
\begin{eqnarray}\notag
&&\Big|\int_{(\mathbb{T}^N)^2}\int_{\mathbb{R}}\rho_{\gamma}(x-y)f^{\pm}_1(x,t,\xi)\bar{f}^{\pm}_2(y,t,\xi)d\xi dydx-\int_{\mathbb{T}^N}\int_{\mathbb{R}}f^{\pm}_1(x,t,\xi)\bar{f}^{\pm}_2(x,t,\xi)d\xi dx\Big|\\ \notag
&=&\Big|\int_{(\mathbb{T}^N)^2}\int_{\mathbb{R}}\rho_{\gamma}(x-y)f^{\pm}_1(x,t,\xi)\bar{f}^{\pm}_2(y,t,\xi)d\xi dydx-\int_{\mathbb{T}^N}\int_{|z|<\gamma}\int_{\mathbb{R}}\rho_{\gamma}(z)f^{\pm}_1(x,t,\xi)\bar{f}^{\pm}_2(x,t,\xi)d\xi dzdx\Big|\\ \notag
&=&\Big|\int_{(\mathbb{T}^N)^2}\int_{\mathbb{R}}\rho_{\gamma}(x-y)f^{\pm}_1(x,t,\xi)(\bar{f}^{\pm}_2(y,t,\xi)-\bar{f}^{\pm}_2(x,t,\xi))d\xi dydx\Big|\\ \notag
&\leq&\sup_{|z|<\gamma}\int_{\mathbb{T}^N}\int_{\mathbb{R}}f^{\pm}_1(x,t,\xi)|\bar{f}^{\pm}_2(x-z,t,\xi)-\bar{f}^{\pm}_2(x,t,\xi)|d\xi dx\\ \label{e-43}
&\leq& 
 \sup_{|z|<\gamma}\int_{\mathbb{T}^N}\int_{\mathbb{R}}|\Lambda_{f^{\pm}_2}(x-z,t,\xi)-\Lambda_{f^{\pm}_2}(x,t,\xi)|d\xi dx.
\end{eqnarray}
As  $\Lambda_{f_2}$ is integrable, we have for a countable sequence $\gamma_n\downarrow 0$, (\ref{e-43}) holds a.s. for all $n$, hence, passing to the limit $n\rightarrow \infty$, we get
\begin{eqnarray}\label{qq-1}
\lim_{n\rightarrow \infty}\Big|\int_{(\mathbb{T}^N)^2}\int_{\mathbb{R}}\rho_{\gamma_n}(x-y)f^{\pm}_1(x,t,\xi)\bar{f}^{\pm}_2(y,t,\xi)d\xi dxdy-\int_{\mathbb{T}^N}\int_{\mathbb{R}}f^{\pm}_1(x,t,\xi)\bar{f}^{\pm}_2(x,t,\xi)d\xi dx\Big|= 0, \ a.s..
\end{eqnarray}
Similarly, it holds that
\begin{eqnarray}\label{qq-2}
\lim_{n\rightarrow \infty}\Big|\int_{(\mathbb{T}^N)^2}\int_{\mathbb{R}}\rho_{\gamma_n}(x-y)\bar{f}^{\pm}_1(x,t,\xi)f^{\pm}_2(y,t,\xi)d\xi dxdy-\int_{\mathbb{T}^N}\int_{\mathbb{R}}\bar{f}^{\pm}_1(x,t,\xi)f^{\pm}_2(x,t,\xi)d\xi dx\Big|=0, \ a.s..
\end{eqnarray}
By a similar argument, passing to the limit $\delta\rightarrow 0$, it follows from  (\ref{e-23})-(\ref{qq-2}) that
\begin{eqnarray}\notag
\lim_{n\rightarrow \infty}\mathcal{E}_t(\gamma_n,\delta_n)=0, \quad a.s..
\end{eqnarray}
Without confusion, from now on, we write
\begin{eqnarray}\label{qq-4}
\lim_{\gamma, \delta\rightarrow 0}\mathcal{E}_t(\gamma,\delta)=0, \quad a.s..
\end{eqnarray}
In particular, when $t=0$, it holds that
\begin{eqnarray}\label{qq-5}
\lim_{\gamma, \delta\rightarrow 0}\mathcal{E}_0(\gamma,\delta)=0.
\end{eqnarray}
Now, we will make some estimates for $\tilde{I}(t)$, $\tilde{J}(t)$, $\tilde{K}(t)$ and $\tilde{H}(t)$.
We start with $\tilde{I}(t)$. Set
\[
\Gamma(\xi,\zeta)=\int^{\infty}_{\zeta}\int^{\xi}_{-\infty}\Upsilon(\xi',\zeta')|\xi'-\zeta'|\psi_{\delta}(\xi'-\zeta')d\xi'd\zeta',
\]
where $\Upsilon(\xi, \zeta)$ is the function appeared in Hypothesis H. Integration by parts yields that
\begin{eqnarray*}
 &&\int^t_0\int_{(\mathbb{T}^N)^2}\int_{\mathbb{R}^2}f_1\bar{f}_2(a(\xi)-a(\zeta))\cdot \nabla_x \alpha d\xi d\zeta dxdyds\\
&\leq& \int^t_0\int_{(\mathbb{T}^N)^2}\int_{\mathbb{R}^2}f_1\bar{f}_2\Upsilon(\xi,\zeta)|\xi-\zeta|| \nabla_x \rho_{\gamma}(x-y)|\psi_{\delta}(\xi-\zeta) d\xi d\zeta dxdyds\\
&=& -\int^t_0\int_{(\mathbb{T}^N)^2}\int_{\mathbb{R}^2}f_1\bar{f}_2\frac{\partial^2 \Gamma(\xi,\zeta)}{\partial\xi\partial\zeta} |\nabla_x \rho_{\gamma}(x-y)| d\xi d\zeta dxdyds\\
&=&\int^t_0\int_{(\mathbb{T}^N)^2}\int_{\mathbb{R}^2}\Gamma(\xi,\zeta)d\nu^{1}_{x,s}\otimes \nu^{2}_{y,s}(\xi,\zeta)|\nabla_x \rho_{\gamma}(x-y)|dxdyds\\
&\leq& C(q_0)\delta\int^t_0\int_{(\mathbb{T}^N)^2}\int_{\mathbb{R}^2}(1+|\xi|^{q_0}+|\zeta|^{q_0})d\nu^{1}_{x,s}\otimes \nu^{2}_{y,s}(\xi,\zeta)|\nabla_x \rho_{\gamma}(x-y)|dxdyds\\
&\leq& C(q_0)\delta\gamma^{-1}\int^t_0\int_{(\mathbb{T}^N)^2}\int_{\mathbb{R}^2}(1+|\xi|^{q_0}+|\zeta|^{q_0})d\nu^{1}_{x,s}\otimes \nu^{2}_{y,s}(\xi,\zeta)dxdyds,
\end{eqnarray*}
where we have used the fact that  $a(\cdot)$ is of polynomial growth with degree $q_0$ and (30) in \cite{D-V-1}.
Namely, we have obtained that
\begin{eqnarray*}
&&\Big|\int^t_0\int_{(\mathbb{T}^N)^2}\int_{\mathbb{R}^2}f_1\bar{f}_2(a(\xi)-a(\zeta))\cdot \nabla_x \alpha d\xi d\zeta dxdyds\Big|\\
&\leq& C(q_0)\delta\gamma^{-1}t+C(q_0)\delta\gamma^{-1}t\Big(\underset{0\leq s\leq t}{{\rm{ess\sup}}}\ \|u^{h}(s)\|^{q_0}_{L^{q_0}(\mathbb{T}^N)}+\underset{0\leq s\leq t}{{\rm{ess\sup}}}\ \|u(s)\|^{q_0}_{L^{q_0}(\mathbb{T}^N)}\Big). \quad a.s..
\end{eqnarray*}
Similar calculations lead to
\begin{eqnarray*}
&&\Big|\int^t_0\int_{(\mathbb{T}^N)^2}\int_{\mathbb{R}^2}\bar{f}_1f_2(a(\xi)-a(\zeta))\cdot \nabla_x \alpha d\xi d\zeta dxdyds\Big|\\
&\leq& C(q_0)\delta\gamma^{-1}t+C(q_0)\delta\gamma^{-1}t\Big(\underset{0\leq s\leq t}{{\rm{ess\sup}}}\ \|u^{h}(s)\|^{q_0}_{L^{q_0}(\mathbb{T}^N)}+\underset{0\leq s\leq t}{{\rm{ess\sup}}}\ \|u(s)\|^{q_0}_{L^{q_0}(\mathbb{T}^N)}\Big). \quad a.s..
\end{eqnarray*}
Combining the above inequalities, we get
\begin{eqnarray}\notag
|\tilde{I}(t)|&\leq& C(q_0)\delta\gamma^{-1}t+C(q_0)\delta\gamma^{-1}t\Big(\underset{0\leq s\leq t}{{\rm{ess\sup}}}\ \|u^{h}(s)\|^{q_0}_{L^{q_0}(\mathbb{T}^N)}+\underset{0\leq s\leq t}{{\rm{ess\sup}}}\ \|u(s)\|^{q_0}_{L^{q_0}(\mathbb{T}^N)}\Big).\quad a.s..
\end{eqnarray}
By (\ref{equ-29}) in Hypothesis H, we see that
\begin{eqnarray*}
\tilde{J}(t)&=&\int^t_0\int_{(\mathbb{T}^N)^2}\int_{\mathbb{R}^2}\alpha \sum_{k\geq 1}|g_k(x,\xi)-g_k(y,\zeta)|^2d\nu^{1}_{x,s}\otimes \nu^{2}_{y,s}(\xi,\zeta)dxdyds\\
&\leq&  D_1\int^t_0\int_{(\mathbb{T}^N)^2}\rho_{\gamma}(x-y)|x-y|^2\int_{\mathbb{R}^2}\psi_{\delta}(\xi-\zeta)d\nu^{1}_{x,s}\otimes \nu^{2}_{y,s}(\xi,\zeta)dxdyds\\
&& + D_1\int^t_0\int_{(\mathbb{T}^N)^2}\int_{\mathbb{R}^2}\rho_{\gamma}(x-y)\psi_{\delta}(\xi-\zeta)|\xi-\zeta|^2d\nu^{1}_{x,s}\otimes \nu^{2}_{y,s}(\xi,\zeta)dxdyds\\
&=:& \tilde{J}_{1}(t)+\tilde{J}_{2}(t).
\end{eqnarray*}
Noting that
\begin{eqnarray*}
\int_{\mathbb{R}^2}\psi_{\delta}(\xi,\zeta)d\nu^{1}_{x,s}\otimes \nu^{2}_{y,s}(\xi,\zeta)&\leq& \delta^{-1}, \quad a.s.,
\\
\int_{(\mathbb{T}^N)^2}\rho_{\gamma}(x-y)|x-y|^2dxdy&\leq&\gamma^2,
\end{eqnarray*}
we have
\begin{eqnarray}\label{e-12}
\tilde{J}_{1}(t)\leq  D_1\delta^{-1}\gamma^2t. \quad a.s..
\end{eqnarray}
For the term $\tilde{J}_{2}$, we have
\begin{eqnarray} \notag
\tilde{J}_{2}&\leq&  \delta D_1 \int^t_0\int_{(\mathbb{T}^N)^2}\int_{|\xi-\zeta|\leq \delta}\rho_{\gamma}(x-y)\psi_{\delta}(\xi-\zeta)|\xi-\zeta|d\nu^{1}_{x,s}\otimes \nu^{2}_{y,s}(\xi,\zeta) dxdyds\\
\label{e-11}
&\leq& \delta D_1C_{\psi}t, \quad a.s.,
\end{eqnarray}
where $C_{\psi}:=\sup_{\xi\in \mathbb{R}}\|\psi(\xi)\|$.
(\ref{e-12}) and (\ref{e-11}) together yield
\begin{eqnarray*}
\tilde{J}(t)\leq  D_1\delta^{-1}\gamma^2t+ D_1C_{\psi}\delta t, \quad a.s..
\end{eqnarray*}
By H\"{o}lder inequality and (\ref{equ-28}), we get
\begin{eqnarray*}
\tilde{H}(t)&\leq&
2\int^t_0|h(s)|_{l^2}\int_{(\mathbb{T}^N)^2}\int_{\mathbb{R}^2}\Big(\sum_{k\geq 1}|g_k(x,\xi)|^2\Big)^{\frac{1}{2}}\rho_{\gamma}(x-y)\chi(\xi,\zeta)d\nu^{1}_{x,s}\otimes \nu^{2}_{y,s}(\xi,\zeta)dxdyds\\
&\leq&
2D^{\frac{1}{2}}_0\int^t_0|h(s)|_{l^2}\int_{(\mathbb{T}^N)^2}\rho_{\gamma}(x-y)dxdyds\\
&\leq&
2D^{\frac{1}{2}}_0\int^t_0|h(s)|_{l^2}ds, \quad a.s.
\end{eqnarray*}
where we have used the fact that $\chi(\xi,\zeta)\leq 1$.

Combining all the above estimates, we deduce that
\begin{eqnarray}\notag
&&\int_{(\mathbb{T}^N)^2}\int_{\mathbb{R}^2}\rho_{\gamma} (x-y)\psi_{\delta}(\xi-\zeta)(f^{\pm}_1(x,t,\xi)\bar{f}^{\pm}_2(y,t,\zeta)+\bar{f}^{\pm}_1(x,t,\xi)f^{\pm}_2(y,t,\zeta))d\xi d\zeta dxdy\\
\notag
&\leq & \int_{(\mathbb{T}^N)^2}\int_{\mathbb{R}^2}\rho_{\gamma} (x-y)\psi_{\delta}(\xi-\zeta)(f_{1,0}(x,\xi)\bar{f}_{2,0}(y,\zeta)+\bar{f}_{1,0}(x,\xi)f_{2,0}(y,\zeta))d\xi d\zeta dxdy\\
\notag
&& +D_1\delta^{-1}\gamma^2t+ D_1C_{\psi}\delta t+C(q_0)\delta\gamma^{-1}t+2D^{\frac{1}{2}}_0\int^t_0|h(s)|_{l^2}ds\\
\label{e-20}
&&+C(q_0)\delta\gamma^{-1}t\Big(\underset{0\leq s\leq t}{{\rm{ess\sup}}}\ \|u^{h}(s)\|^{q_0}_{L^{q_0}(\mathbb{T}^N)}+\underset{0\leq s\leq t}{{\rm{ess\sup}}}\ \|u(s)\|^{q_0}_{L^{q_0}(\mathbb{T}^N)}\Big)+\tilde{K}(t), \quad a.s..
\end{eqnarray}
For $s\in (0,T)$, set
\[
R(s):=\int_{(\mathbb{T}^N)^2}\int_{\mathbb{R}^2}\rho_{\gamma} (x-y)\psi_{\delta}(\xi-\zeta)(f^{\pm}_1(x,s,\xi)\bar{f}^{\pm}_2(y,s,\zeta)+\bar{f}^{\pm}_1(x,s,\xi)f^{\pm}_2(y,s,\zeta))d\xi d\zeta dxdy.
\]
Then, we deduce from (\ref{e-20}) that
\begin{eqnarray*}
\underset{0\leq s\leq t}{{\rm{ess\sup}}}\ R(s)&\leq&  \int_{\mathbb{T}^N}\int_{\mathbb{R}}(f_{1,0}\bar{f}_{2,0}+\bar{f}_{1,0}f_{2,0})d\xi dx+\mathcal{E}_0(\gamma,\delta)\\
&&+D_1\delta^{-1}\gamma^2t+ D_1C_{\psi}\delta t+C(q_0)\delta\gamma^{-1}t+2D^{\frac{1}{2}}_0\int^t_0|h(s)|_{l^2}ds\\
&&+C(q_0)\delta\gamma^{-1}t\Big(\underset{0\leq s\leq t}{{\rm{ess\sup}}}\ \|u^{h}(s)\|^{q_0}_{L^{q_0}(\mathbb{T}^N)}+\underset{0\leq s\leq t}{{\rm{ess\sup}}}\ \|u(s)\|^{q_0}_{L^{q_0}(\mathbb{T}^N)}\Big)\\
&& +\sup_{0\leq s\leq t}|\tilde{K}|(s), \quad a.s.,
\end{eqnarray*}
where $\lim_{\gamma,\delta\rightarrow 0}\mathcal{E}_0(\gamma,\delta)=0$.

Taking the $L^2(\Omega)$-norm on both sides and using H\"{o
}lder inequality, we get that
\begin{eqnarray}\notag
\left(\mathbb{E}\Big|\underset{0\leq s\leq t}{{\rm{ess\sup}}}\ R(s)\Big|^2\right)^{\frac{1}{2}}
&\lesssim & \int_{\mathbb{T}^N}\int_{\mathbb{R}}(f_{1,0}\bar{f}_{2,0}+\bar{f}_{1,0}f_{2,0})d\xi dx+\mathcal{E}_0(\gamma,\delta)\\
\notag
&& +D_1\delta^{-1}\gamma^2 t+ D_1C_{\psi}\delta t+C(q_0)\delta\gamma^{-1}t+2t^{\frac{1}{2}}D^{\frac{1}{2}}_0\left(\mathbb{E}\int^t_0|h(s)|^2_{l^2}ds\right)^{\frac{1}{2}}\\
\label{e-1}
&& +C(q_0)\delta\gamma^{-1}t\mathcal{R} +\left(\mathbb{E}\Big|\sup_{s\in [0,t]}|\tilde{K}(s)|\Big|^2\right)^{\frac{1}{2}},
\end{eqnarray}
where
\begin{eqnarray*}
\mathcal{R}:=\left\{\Big(\mathbb{E}\underset{0\leq s\leq T}{{\rm{ess\sup}}}\ \|u^{h}(s)\|^{2q_0}_{L^{2q_0}(\mathbb{T}^N)}\Big)^{\frac{1}{2}}
+\Big(\mathbb{E}\underset{0\leq s\leq T}{{\rm{ess\sup}}}\ \|u(s)\|^{2q_0}_{L^{2q_0}(\mathbb{T}^N)}\Big)^{\frac{1}{2}}\right\}.
\end{eqnarray*}
In view of  (\ref{qq-8}) and (\ref{equation-1}), we have
\begin{eqnarray}\label{qq-29-1}
\mathcal{R}<+\infty.
\end{eqnarray}
To estimate the stochastic integral term, we use the Burkholder inequality to get
\begin{eqnarray}\label{e-7}
&&\mathbb{E}\Big|\sup_{s\in [0,t]}|\tilde{K}|(s)\Big|^2\\ \notag
&=& \mathbb{E}\Big|\sup_{s\in [0,t]}\sum_{k\geq 1}\int^s_0\int_{(\mathbb{T}^N)^2}\int_{\mathbb{R}^2}\chi(\xi,\zeta) \rho_{\gamma}(x-y)(g_k(x,\xi)-g_{k}(y,\zeta)) d \nu^{1}_{x,r}\otimes \nu^{2}_{y,r}(\xi,\zeta)dxdyd\beta_k(r)\Big|^2\\
\notag
&\lesssim& \mathbb{E}\Big[\int^t_0\sum_{k\geq 1}\Big|\int_{(\mathbb{T}^N)^2}\int_{\mathbb{R}^2}|g_k(x,\xi)-g_k(y,\zeta)|\rho_{\gamma}(x-y)\chi(\xi,\zeta) d \nu^{1}_{x,r}\otimes \nu^{2}_{y,r}(\xi,\zeta)dxdy\Big|^2dr\Big].
\end{eqnarray}
Recalling (\ref{e-6}) in Hypothesis H
\[
|g_k(x,\xi)-g_k(y,\zeta)|\leq C^1_k(|x-y|+|\xi-\zeta|),\quad \sum_{k\geq 1}|C^1_k|^2\leq \frac{D_1}{2},
\]
it follows from (\ref{e-7}) that
\begin{eqnarray*}
&&\mathbb{E}\Big|\sup_{s\in [0,t]}|\tilde{K}|(s)\Big|^2\\
&\lesssim& D_1\mathbb{E}\Big[\int^t_0\Big|\int_{(\mathbb{T}^N)^2}\int_{\mathbb{R}^2}(|x-y|+|\xi-\zeta|)\rho_{\gamma}(x-y)\chi(\xi,\zeta) d\nu^{1}_{x,r}\otimes \nu^{2}_{y,r}(\xi,\zeta)dxdy\Big|^2dr\Big].
\end{eqnarray*}
Since
\begin{eqnarray}\notag
\int_{(\mathbb{T}^N)^2}\int_{\mathbb{R}^2}|x-y|\rho_{\gamma}(x-y) \chi(\xi,\zeta)d \nu^{1}_{x,r}\otimes \nu^{2}_{y,r}(\xi,\zeta)dxdy\leq \gamma, \quad a.s..
\end{eqnarray}
it follows that
\begin{eqnarray}\label{e-24}
\mathbb{E}\Big|\sup_{s\in [0,t]}|\tilde{K}|(s)\Big|^2
\lesssim D_1\mathbb{E}\Big[\int^t_0\Big|\gamma+\int_{(\mathbb{T}^N)^2}|u^{h,\pm}-u^{\pm}|\rho_{\gamma}(x-y) dxdy\Big|^2dr\Big].
\end{eqnarray}
With the help of (\ref{111.1-2}), (\ref{e-23}) and (\ref{e-22}), we deduce that
\begin{eqnarray}\notag
&& \int_{(\mathbb{T}^N)^2}|u^{h,\pm}(x,r)-u^{\pm}(y,r)|\rho_{\gamma}(x-y) dxdy\\ \notag
&=& \int_{(\mathbb{T}^N)^2}\Big((u^{h,\pm}(x,r)-u^{\pm}(y,r))^{+}+(u^{h,\pm}(x,r)-u^{\pm}(y,r))^{-}\Big)\rho_{\gamma}(x-y) dxdy\\
\notag
&=&\int_{(\mathbb{T}^N)^2}\int_{\mathbb{R}}(\bar{f}^{\pm}_1(x,r,\xi)f^{\pm}_2(y,r,\xi)+f^{\pm}_1(x,r,\xi)\bar{f}^{\pm}_2(y,r,\xi))\rho_{\gamma}(x-y)d \xi  dxdy\\ \notag
&\leq& 4\delta+\int_{(\mathbb{T}^N)^2}\int_{\mathbb{R}^2}(\bar{f}^{\pm}_1(x,r,\xi)f^{\pm}_2(y,r,\zeta)+f^{\pm}_1(x,r,\xi)\bar{f}^{\pm}_2(y,r,\zeta))\rho_{\gamma}(x-y)\psi_{\delta}(\xi-\zeta) d \xi d\zeta dxdy\\
\label{e-25}
&=& 4\delta+R(r). \quad a.s..
\end{eqnarray}
Combining (\ref{e-24}) and (\ref{e-25}), we obtain  that
\begin{eqnarray}\notag
\mathbb{E}\Big|\sup_{s\in [0,t]}|\tilde{K}(s)|\Big|^2
&\lesssim& D_1\mathbb{E}\Big[\int^t_0\Big|\gamma+4\delta+R(r)\Big|^2dr\Big]\\
\notag
&\leq & 2D_1\mathbb{E}\Big[\int^t_0\Big|\gamma+4\delta\Big|^2dr+\int^t_0|R(r)|^2dr\Big]
\\
\label{e-2}
&\leq &
2D_1t |\gamma+4\delta|^2+2D_1\mathbb{E}\Big(\int^t_0R^2(r)dr\Big).
\end{eqnarray}
Substitute  (\ref{e-2}) back into (\ref{e-1}) to get
\begin{eqnarray*}\notag
&&\left(\mathbb{E}\Big|\underset{0\leq s\leq t}{{\rm{ess\sup}}}\ R(s)\Big|^2\right)^{\frac{1}{2}}\\
&\lesssim & \int_{\mathbb{T}^N}\int_{\mathbb{R}}(f_{1,0}\bar{f}_{2,0}+\bar{f}_{1,0}f_{2,0})d\xi dx+\mathcal{E}_0(\gamma,\delta)+D_1\delta^{-1}\gamma^2t+ D_1C_{\psi}\delta t+C(q_0)\delta\gamma^{-1}t\\
&&+2t^{\frac{1}{2}}D^{\frac{1}{2}}_0\left(\mathbb{E}\int^t_0|h(s)|^2_{l^2}ds\right)^{\frac{1}{2}}
+C(q_0)\delta\gamma^{-1}\mathcal{R}t +2^{\frac{1}{2}}D^{\frac{1}{2}}_1t^{\frac{1}{2}}
|\gamma+4\delta| +2^{\frac{1}{2}}D^{\frac{1}{2}}_1\Big(\mathbb{E}\Big(\int^t_0R^2(r)dr\Big)\Big)^{\frac{1}{2}}.
\end{eqnarray*}
Squaring the above inequality, we get
\begin{eqnarray}\notag
&&\mathbb{E}\Big|\underset{0\leq s\leq t}{{\rm{ess\sup}}}\ R(s)\Big|^2\\ \notag
&\lesssim & \Big[\int_{\mathbb{T}^N}\int_{\mathbb{R}}(f_{1,0}\bar{f}_{2,0}+\bar{f}_{1,0}f_{2,0})d\xi dx+\mathcal{E}_0(\gamma,\delta)+D_1\delta^{-1}\gamma^2t+ D_1C_{\psi}\delta t+C(q_0)\delta\gamma^{-1}t\\ \notag
&&+2t^{\frac{1}{2}}D^{\frac{1}{2}}_0\left(\mathbb{E}\int^t_0|h(s)|^2_{l^2}ds\right)^{\frac{1}{2}}
+C(q_0)\delta\gamma^{-1}\mathcal{R}t +2^{\frac{1}{2}}D^{\frac{1}{2}}_1t^{\frac{1}{2}}
|\gamma+4\delta|\Big]^2\\
\label{e-8}
&& +2D_1\int^t_0\Big(\mathbb{E}\Big|\underset{0\leq s\leq r}{{\rm{ess\sup}}}\ R(s)\Big|^2\Big)dr.
\end{eqnarray}
Applying Gronwall inequality to (\ref{e-8}), we get
\begin{eqnarray}\notag
&&\left(\mathbb{E}\Big|\underset{0\leq s\leq T}{{\rm{ess\sup}}}\ R(s)\Big|^2\right)^{\frac{1}{2}}\\ \notag
&\lesssim& e^{ D_1T}\Big[\int_{\mathbb{T}^N}\int_{\mathbb{R}}(f_{1,0}\bar{f}_{2,0}+\bar{f}_{1,0}f_{2,0})d\xi dx+\mathcal{E}_0(\gamma,\delta)+D_1\delta^{-1}\gamma^2T+ D_1C_{\psi}\delta T+C(q_0)\delta\gamma^{-1}T
\\
\label{qq-10}
&&  \quad \quad +2T^{\frac{1}{2}}D^{\frac{1}{2}}_0\Big(\mathbb{E}\int^T_0|h(s)|^2_{l^2}ds\Big)^{\frac{1}{2}}
+C(q_0)\delta\gamma^{-1}\mathcal{R}T +2^{\frac{1}{2}}D^{\frac{1}{2}}_1T^{\frac{1}{2}}
|\gamma+4\delta|\Big].
\end{eqnarray}
Let
\[
Q(s):=\int_{(\mathbb{T}^N)^2}\int_{(\mathbb{R})^2}\rho_{\gamma}(x-y)\psi_{\gamma}(\xi-\zeta)(f^{\pm}_2(s,x,\xi)\bar{f}^{\pm}_2(s,y,\zeta)+\bar{f}^{\pm}_2(s,x,\xi){f}^{\pm}_2(s,y,\zeta))d\xi d\zeta dxdy.
\]
Applying the same arguments to $f^{\pm}_2$ and $\bar{f}^{\pm}_2$ (in this case, $h=0$ ), we can show that
\begin{eqnarray}\notag
\left(\mathbb{E}\Big|\underset{0\leq s\leq T}{{\rm{ess\sup}}}\ Q(s)\Big|^2\right)^{\frac{1}{2}}
&\lesssim & e^{D_1T}\Big[\mathcal{E}_0(\gamma,\delta)+D_1\delta^{-1}\gamma^2T+ D_1C_{\psi}\delta T+C(q_0)\delta\gamma^{-1}T\\
\label{qq-12}
&&
+C(q_0)\delta\gamma^{-1}\mathcal{R}T +2^{\frac{1}{2}}D^{\frac{1}{2}}_1T^{\frac{1}{2}}
|\gamma+4\delta|\Big].
\end{eqnarray}
On the other hand, from (\ref{qq-3}), it follows that
\begin{eqnarray}\notag
&&\left(\mathbb{E} \Big|\underset{0\leq s\leq T}{{\rm{ess\sup}}}\ \int_{\mathbb{T}^N}\int_{\mathbb{R}}(f^{\pm}_1(s,x,\xi)\bar{f}^{\pm}_2(s,x,\xi)+\bar{f}^{\pm}_1(s,x,\xi){f}^{\pm}_2(s,x,\xi))d\xi dx\Big|^2\right)^{\frac{1}{2}}\\ \label{eee-1}
&\lesssim& \left(\mathbb{E}\Big|\underset{0\leq s\leq T}{{\rm{ess\sup}}}\ |\mathcal{E}_s(\gamma,\delta)|\Big|^2\right)^{\frac{1}{2}}+\left(\mathbb{E}\Big|\underset{0\leq s\leq T}{{\rm{ess\sup}}}\ R(s)\Big|^2\right)^{\frac{1}{2}}.
\end{eqnarray}
Now  we will provide estimates for $ \left(\mathbb{E}\Big|\underset{0\leq s\leq T}{{\rm{ess\sup}}}\ |\mathcal{E}_s(\gamma,\delta)|\Big|^2\right)^{\frac{1}{2}}$.
For any $s\in (0,T)$, we write
\begin{eqnarray*}
\mathcal{E}_s(\gamma, \delta)&=&\int_{(\mathbb{T}^N)^2}\int_{\mathbb{R}^2}(f^{\pm}_1(x,s,\xi)\bar{f}^{\pm}_2(y,s,\zeta)+\bar{f}^{\pm}_1(x,s,\xi){f}^{\pm}_2(y,s,\zeta))\rho_{\gamma}(x-y)\psi_{\delta}(\xi-\zeta)dxdyd\xi d\zeta
\\
&&-\int_{\mathbb{T}^N}\int_{\mathbb{R}}(f^{\pm}_1(x,s,\xi)\bar{f}^{\pm}_2(x,s,\xi)+\bar{f}^{\pm}_1(x,s,\xi){f}^{\pm}_2(x,s,\xi))d\xi dx\\
&=& \Big[\int_{(\mathbb{T}^N)^2}\int_{\mathbb{R}}\rho_{\gamma}(x-y)(f^{\pm}_1(x,s,\xi)\bar{f}^{\pm}_2(y,s,\xi)+\bar{f}^{\pm}_1(x,s,\xi){f}^{\pm}_2(y,s,\xi))d\xi dxdy\\
&& -\int_{\mathbb{T}^N}\int_{\mathbb{R}}(f^{\pm}_1(x,s,\xi)\bar{f}^{\pm}_2(x,s,\xi)+\bar{f}^{\pm}_1(x,s,\xi){f}^{\pm}_2(x,s,\xi))d\xi dx\Big]\\
&& +\Big[\int_{(\mathbb{T}^N)^2}\int_{\mathbb{R}^2}(f^{\pm}_1(x,s,\xi)\bar{f}^{\pm}_2(y,s,\zeta)+\bar{f}^{\pm}_1(x,s,\xi){f}^{\pm}_2(y,s,\zeta))\rho_{\gamma}(x-y)\psi_{\delta}(\xi-\zeta)dxdyd\xi d\zeta\\
&& -\int_{(\mathbb{T}^N)^2}\int_{\mathbb{R}}\rho_{\gamma}(x-y)(f^{\pm}_1(x,s,\xi)\bar{f}^{\pm}_2(y,s,\xi)+\bar{f}^{\pm}_1(x,s,\xi){f}^{\pm}_2(y,s,\xi))d\xi dxdy \Big]\\
&=:&H_1+H_2.
\end{eqnarray*}
By (\ref{e-23}) and (\ref{e-22}), we have
\begin{eqnarray}\label{qq-15}
|H_2|\leq 4\delta, \quad a.s..
\end{eqnarray}
On the other hand,
\begin{eqnarray*}
|H_1|&\leq& \Big|\int_{(\mathbb{T}^N)^2}\rho_{\gamma}(x-y)\int_{\mathbb{R}}I_{u^{h,\pm}(x,s)>\xi}(I_{u^{\pm}(x,s)\leq \xi}-I_{u^{\pm}(y,s)\leq \xi})d\xi dxdy\Big|\\
&& +\Big|\int_{(\mathbb{T}^N)^2}\rho_{\gamma}(x-y)\int_{\mathbb{R}}I_{u^{h,\pm}(x,s)\leq\xi}(I_{u^{\pm}(x,s)> \xi}-I_{u^{\pm}(y,s)> \xi})d\xi dxdy\Big|\\
&\leq& 2\int_{(\mathbb{T}^N)^2}\rho_{\gamma}(x-y)|u^{\pm}(x,s)-u^{\pm}(y,s)|dxdy, \quad a.s..
\end{eqnarray*}
Using (\ref{e-23}) and (\ref{e-22}) again, it follows that
\begin{eqnarray*}
&&\int_{(\mathbb{T}^N)^2}\rho_{\gamma}(x-y)|u^{\pm}(x,s)-u^{\pm}(y,s)|dxdy\\
&=& \int_{(\mathbb{T}^N)^2}\int_{\mathbb{R}}\rho_{\gamma}(x-y)(f^{\pm}_2(x,s,\xi)\bar{f}^{\pm}_2(y,s,\xi)+\bar{f}^{\pm}_2(x,s,\xi){f}^{\pm}_2(y,s,\xi))d\xi dxdy\\
&\leq& \int_{(\mathbb{T}^N)^2}\int_{\mathbb{R}^2}\rho_{\gamma}(x-y)\psi_{\delta}(\xi-\zeta)(f^{\pm}_2(x,s,\xi)\bar{f}^{\pm}_2(y,s,\zeta)+\bar{f}^{\pm}_2(x,s,\xi){f}^{\pm}_2(y,s,\zeta))d\xi d\zeta dxdy+4\delta\\
&=&Q(s)+4\delta, \quad a.s..
\end{eqnarray*}
Hence,
\begin{eqnarray}\label{qq-14}
|H_1|\leq 2Q(s)+8\delta, \quad a.s..
\end{eqnarray}
Collecting (\ref{qq-15}) and (\ref{qq-14}) yields
\begin{eqnarray*}
|\mathcal{E}_s(\gamma, \delta)|\leq 2Q(s)+12\delta, \quad a.s.,
\end{eqnarray*}
By (\ref{qq-12}), we deduce that
\begin{eqnarray}\notag
&&\Big(\mathbb{E}\big|\underset{0\leq s\leq T}{{\rm{ess\sup}}}\ |\mathcal{E}_s(\gamma,\delta)|\big|^2\Big)^{\frac{1}{2}}\\ \notag
&\lesssim& \Big(\mathbb{E}|\underset{0\leq s\leq T}{{\rm{ess\sup}}}\ Q(s)|^2\Big)^{\frac{1}{2}}+\delta\\
\notag
&\lesssim&e^{D_1T}\Big[\mathcal{E}_0(\gamma,\delta)+D_1\delta^{-1}\gamma^2T+ D_1C_{\psi}\delta T+C(q_0)\delta\gamma^{-1}T\\
\label{qq-16}
&&
+C(q_0)\delta\gamma^{-1}\mathcal{R}T +2^{\frac{1}{2}}D^{\frac{1}{2}}_1T^{\frac{1}{2}}
|\gamma+4\delta|\Big]+\delta.
\end{eqnarray}
Combining (\ref{qq-10}) and (\ref{qq-16}), we deduce from (\ref{eee-1}) that
\begin{eqnarray*}\notag
&&\left(\mathbb{E} \Big|\underset{0\leq s\leq T}{{\rm{ess\sup}}}\ \int_{\mathbb{T}^N}\int_{\mathbb{R}}(f^{\pm}_1(s,x,\xi)\bar{f}^{\pm}_2(s,x,\xi)+\bar{f}^{\pm}_1(s,x,\xi){f}^{\pm}_2(s,x,\xi))d\xi dx\Big|^2\right)^{\frac{1}{2}}\\ \notag
&\lesssim&e^{ D_1T}\Big[\int_{\mathbb{T}^N}\int_{\mathbb{R}}(f_{1,0}\bar{f}_{2,0}+\bar{f}_{1,0}f_{2,0})d\xi dx+2\mathcal{E}_0(\gamma,\delta)+2D_1\delta^{-1}\gamma^2T+2D_1C_{\psi}\delta T+2C(q_0)\delta\gamma^{-1}T
\\
&& +2T^{\frac{1}{2}}D^{\frac{1}{2}}_0\Big(\mathbb{E}\int^T_0|h(s)|^2_{l^2}ds\Big)^{\frac{1}{2}}
+2C(q_0)\delta\gamma^{-1}\mathcal{R}T +3D^{\frac{1}{2}}_1T^{\frac{1}{2}}
|\gamma+4\delta|\Big]+\delta.
\end{eqnarray*}
Note that we have $f^{\pm}_1(x,s,\xi)=I_{u^{h,\pm}(s,x)>\xi}$ and $f^{\pm}_2(x,s,\xi)=I_{u^{\pm}(s,x)>\xi}$ with initial data $f_{1,0}=I_{\eta>\xi}$ and ${f}_{2,0}=I_{\eta>\xi}$, respectively. In view of (\ref{111.1-2}), we can rewrite the above inequality as
\begin{eqnarray}\label{e-26}
\left(\mathbb{E}\Big|\underset{0\leq s\leq T}{{\rm{ess\sup}}}\ \|u^{h,\pm}(s)-u^{\pm}(s)\|_{L^1(\mathbb{T}^N)}\Big|^2\right)^{\frac{1}{2}}
\lesssim r(\gamma, \delta),
\end{eqnarray}
where
\begin{eqnarray}\notag
&&r(\gamma, \delta)\\ \notag
&:=&e^{D_1T}\Big[2\mathcal{E}_0(\gamma,\delta)+2D_1\delta^{-1}\gamma^2T+2D_1C_{\psi}\delta T+2C(q_0)\delta\gamma^{-1}T
\\
\label{e-10}
&& +2T^{\frac{1}{2}}D^{\frac{1}{2}}_0\Big(\mathbb{E}\int^T_0|h(s)|^2_{l^2}ds\Big)^{\frac{1}{2}}
+2C(q_0)\delta\gamma^{-1}\mathcal{R}T +3D^{\frac{1}{2}}_1T^{\frac{1}{2}}
|\gamma+4\delta|\Big]+\delta.
\end{eqnarray}
Taking
\[
\delta=\gamma^{\frac{4}{3}},
\]
we have
\begin{eqnarray*}
&&r(\gamma, \delta)\\ \notag
&:=&e^{D_1T}\Big[2\mathcal{E}_0(\gamma,\delta)+2D_1\gamma^{\frac{2}{3}}T+ 2D_1C_{\psi}\gamma^{\frac{4}{3}} T+2C(q_0)\gamma^{\frac{1}{3}}T
\\
&& +2T^{\frac{1}{2}}D^{\frac{1}{2}}_0\Big(\mathbb{E}\int^T_0|h(s)|^2_{l^2}ds\Big)^{\frac{1}{2}}
+2C(q_0)\gamma^{\frac{1}{3}}\mathcal{R}T +3D^{\frac{1}{2}}_1T^{\frac{1}{2}}
|\gamma+4\gamma^{\frac{4}{3}}|\Big]+\gamma^{\frac{4}{3}}.
\end{eqnarray*}
Let $\gamma\rightarrow 0$ to get
\begin{eqnarray*}
 \lim_{\gamma\rightarrow 0} r(\gamma, \delta)\leq 2T^{\frac{1}{2}}D^{\frac{1}{2}}_0e^{D_1T}\Big(\mathbb{E}\int^T_0|h(s)|^2_{l^2}ds\Big)^{\frac{1}{2}}.
\end{eqnarray*}
Therefore, we deduce from (\ref{e-26}) that
\begin{eqnarray*}
\left(\mathbb{E}\Big|\underset{0\leq s\leq T}{{\rm{ess\sup}}}\ \|u^{h,\pm}(s)-u^{\pm}(s)\|_{L^1(\mathbb{T}^N)}\Big|^2\right)^{\frac{1}{2}}
\leq2\mathcal{D}T^{\frac{1}{2}}D^{\frac{1}{2}}_0e^{D_1T}\Big(\mathbb{E}\int^T_0|h(s)|^2_{l^2}ds\Big)^{\frac{1}{2}},
\end{eqnarray*}
which implies
\begin{eqnarray}
\mathbb{E}\Big|\int_0^T\|u^{h}(t)-u(t)\|_{L^1(\mathbb{T}^N)}dt\Big|^2\leq 4\mathcal{D}^2D_{0}T^3e^{2D_1T}\mathbb{E}\int^T_0|h(s)|^2_{l^2}ds.
\end{eqnarray}
We complete the proof.

\end{proof}

\noindent{\bf  Acknowledgements}\quad  This work is partly supported by National Natural Science Foundation of China (No. 11671372, 11971456, 11721101, 11801032, 11971227), Key Laboratory of Random Complex Structures and Data Science, Academy of Mathematics and Systems Science, Chinese Academy of Sciences (No. 2008DP173182), Beijing Institute of Technology Research Fund Program for Young Scholars.

\def\refname{ References}


\begin{thebibliography}{2}
\bibitem[AWC]{K-P-J}
K. Ammar, P. Wittbolda and J. Carrillo. \emph{Scalar conservation laws with general boundary condition and continuous flux function}. J. Differential Equations 228, no. 1, 111-139 (2006).

\bibitem[BH]{BH2}
B. Boufoussi and S. Hajji. \emph{Transportation inequalities for stochastic heat equations.} Statist. Probab. Lett. 139, 75-83 (2018).
\bibitem[Da]{Dafermos} C.M. Dafermos. \emph{Hyperbolic Conservation Laws in Continuum Physics}. 2nd edn. Berlin, Springer (2005).


\bibitem[DHV]{DHV} A. Debussche, M. Hofmanov\'{a} and J. Vovelle. \emph{Degenerate parabolic stochastic partial differential equations: Quasilinear case}. Ann. Probab. 44, no. 3, 1916-1955 (2016).

\bibitem[DV-1]{D-V-1} A. Debussche and J. Vovelle.
\emph{Scalar conservation laws with stochastic forcing (revised version). http://math.univ-lyon1.fr/vovelle/DebusscheVovelleRevised}.
J. Funct. Anal. 259, no. 4, 1014-1042 (2010).
\bibitem[DV-2]{D-V-2} A. Debussche and J. Vovelle.
\emph{Invariant measure of scalar first-order conservation laws with stochastic forcing}. Probab. Theory Related Fields 163, no. 3-4, 575-611 (2015).

\bibitem[DGW]{DGW}
H. Djellout, A. Guillin and L. Wu. \emph{
Transportation cost-information inequalities and applications to random dynamical systems and diffusions.} Ann. Probab. 32, no. 3B, 2702-2732 (2004).

\bibitem[DWZZ]{DWZZ} Z. Dong, J.-L. Wu, R. Zhang and  T. Zhang. \emph{Large deviation principles for first-order scalar conservation laws with stochastic forcing.}
Ann. Appl. Probab. 30, no. 1, 324-367 (2020).

\bibitem[DP]{DP}
D.P. Dubhashi and A. Panconesi. \emph{Concentration of measure for the analysis of randomized algorithms.} Cambridge University Press, 2009.

 \bibitem[FN]{F-N} J. Feng and D. Nualart. \emph{Stochastic scalar conservation laws.} J. Funct. Anal. 255, no. 2, 313-373 (2008).
\bibitem[GRS]{GRS}
N. Gozlan, C. Roberto and P.-M. Samson. \emph{Characterization of Talagrand transport-entropy inequalities in metric spaces.}
Ann. Probab. 41, no. 5, 3112-3139 (2013).

\bibitem[KS]{KS}
D.~Khoshnevisan and A. Sarantsev. \emph{Talagrand concentration inequalities for stochastic partial differential equations.} Stoch. Partial Differ. Equ. Anal. Comput. 7, no. 4, 679-698 (2019).

\bibitem[K]{K} J.U. Kim. \emph{On a stochastic scalar conservation law}. Indiana Univ. Math. J. 52 227-256 (2003).

\bibitem[Kr-1]{K-69} S.N.  Kru\v{z}kov. \emph{Generalized solutions of the Cauchy problem in the large for first order nonlinear equations.} Dokl. Akad. Nauk. SSSR 187 29-32 (1969).

\bibitem[Kr-2]{K-70} S.N.  Kru\v{z}kov. \emph{First order quasilinear equations with several independent variables.} Mat. Sb. (N.S.) 81 (123) 228-255 (1970).

\bibitem[La]{La}
D. Lacker. \emph{Liquidity, risk measures, and concentration of measure.} Math. Oper. Res. 43, no. 3, 813-837 (2018).
\bibitem[Le]{L}
M. Ledoux. \emph{The concentration of measure phenomenon.}
American Mathematical Society, 2001.

\bibitem[LPT]{L-P-T} P.L. Lions, B. Perthame and E. Tadmor. \emph{A kinetic formulation of multidimensional scalar conservation laws and related equations}. J. of A.M.S., 7, 169-191 (1994).
\bibitem[LW]{LW}
Y. Li and X. Wang. \emph{Transportation inequalities for stochastic wave equation.} arXiv:1811.06385., 2018.
\bibitem[M1]{M-1}
K. Marton. \emph{Bounding $\overline d$-distance by informational divergence: a method to prove measure concentration.} Ann. Probab. 24, no. 2, 857-866 (1996).
\bibitem[M2]{M-2}
K. Marton. \emph{A measure concentration inequality for contracting Markov chains.} Geom. Funct. Anal. 6 , no. 3, 556-571 (1996).
\bibitem[Ma]{M}
P. Massart. \emph{Concentration inequalities and model selection.} Lecture Notes in Mathematics, volume 1896, Springer, 2007.
\bibitem[OV]{OV}
F. Otto and C. Villani. \emph{Generalization of an inequality by Talagrand and links with the logarithmic Sobolev inequality.} J. Funct. Anal. 173, no. 2, 361-400 (2000).
\bibitem[P]{P}
S. Pal. \emph{Concentration for multidimensional diffusions and their boundary local times.} Probab. Theory Related Fields 154, no. 1-2, 225-254 (2012).
\bibitem[PS]{PS}
S. Pal and A. Sarantsev. \emph{A Note on Transportation Cost Inequalities for Diffusions with Reflections.}
Electron. Commun. Probab. 24, Paper No. 21, 11 pp (2019).
\bibitem[PS1]{PS1}
S. Pal and M. Shkolnikov. \emph{Concentration of measure for Brownian particle systems interacting through their ranks.} Ann. Appl. Probab. 24, no. 4, 1482-1508 (2014).
\bibitem[SZ]{SZ}
S. Shang and T. Zhang. \emph{Talagrand Concentration Inequalities for Stochastic Heat-Type Equations  under Uniform Distance.} Electron. J. Probab. 24, Paper No. 129, 15 pp (2019).
\bibitem[T1]{T1}
M. Talagrand. \emph{Concentration of measure and isoperimetric inequalities in product spaces.} Inst. Hautes \'{e}tudes Sci. Publ. Math. No. 81, 73-205 (1995).
\bibitem[T2]{T2}
M. Talagrand. \emph{Transportation cost for Gaussian and other product measures.} Geom. Funct. Anal. 6, no. 3, 587-600 (1996).
\bibitem[T3]{T3}
M. Talagrand. \emph{New concentration inequalities in product spaces.} Invent. Math. 126, no. 3, 505-563 (1996).
\bibitem[U]{U}
A. S. \"{U}st\"{u}nel. \emph{Transportation cost inequalities for diffusions under uniform distance.} In Stochastic analysis and related topics, pp. 203-214. Springer, 2012.

\bibitem[VW]{V-W} G. Vallet, P. Wittbold: \emph{On a stochastic first-order hyperbolic equation in a bounded domain}.
Infin. Dimens. Anal. Quantum Probab. Relat. Top. 12, no. 4, 613-651 (2009).

\end{thebibliography}
\end{document}